\begin{document}

\title[On finiteness of log canonical models]{On finiteness of log canonical models}


\begin{abstract}
Let $(X, \De)/U$ be klt pairs and $Q$ be a convex set of divisors. Assuming that the relative Kodaira dimensions are non-negative, then there are only finitely many log canonical models when the boundary divisors varying in a relatively compact rational polytope in $Q$. As a consequence, we show the existence of the log canonical model for a klt pair $(X, \De)/U$ with real coefficients.
\end{abstract}

\author{Zhan Li}
\address{Department of Mathematics, Southern University of Science and Technology, 1088 Xueyuan Rd, Shenzhen 518055, China} \email{lizhan@sustech.edu.cn}

\maketitle

\setcounter{tocdepth}{1}
\tableofcontents

\section{Introduction}\label{sec: introduction}

Throughout this paper, we work with varieties defined over complex numbers.

A minimal model of a variety is a good representative in its birational equivalent class which could substitute the original variety in many situations. However, minimal models lack the uniqueness. To remedy this problem, one can pass to its log canonical model which is unique when it exists. 

Let $(X, \De)$ be a projective klt pair and $K_X+\De$ is $\Qq$-Cartier. Combing with the method of \cite{FM00} and \cite[Theorem 1.2]{BCHM10} (see \cite{Siu06} for an analytic proof), \cite[Corollary 1.1.2]{BCHM10} establishes that the log canonical ring
\begin{equation}\label{eq: canonical ring}
R(X, K_X+\Delta)=\bigoplus_{m=0}^\infty H^0(X, \Oo_{X}(\lf m(K_X+\Delta)\rf))
\end{equation} is finitely generated. In particular, \[
 X \dasharrow \proj R(K_X+\Delta)
 \] is the log canonical model for $(X, \De)$. This shows that $(X, \De)$ always admits the log canonical model even if a (good) minimal model of $(X, \De)$ is not assumed to exist. 

However, when $K_X+\De$ is an $\Rr$-Cartier divisor, the log canonical model may still exist while $R(X, K_X+\Delta)$ is not finitely generated. The motivation of this paper is to study the log canonical model in such situation.  Along the way, we establish the finiteness of log canonical models assuming that the relative Kodaira dimensions are non-negative. The precise statement is the following. 

Notice that for a rational map $\phi: X \dto Z/U$ and a rational polytope $P \subset \WDiv(X)_{\Rr}$, $\Aa_{\phi, \pi}(P)$ is defined to be
\[
\{\De \in P \mid \phi \text{~is the log canonical model for~} (X, \De) \text{~over~} U\}.
\] For other relevant definitions and notation, see Section \ref{sec: preliminaries}. 

\begin{theorem}\label{thm: main}
Let $\pi: X\to U$ be a projective morphism between normal varieties and $Q \subset \WDiv(X)_{\Rr}$ be a convex set. Suppose that for any $\De \in Q$, $(X, \De)$ is klt with $\ka(K_X+\De; X/U) \geq 0$. Let $P$ be a rational polytope which is contained in the relative interior of $Q$. Then there are finitely many rational maps $\phi_i: X \dto Z_i/U, 1 \leq i \leq q$, such that $P$ is partitioned into subset $\Aa_i=\Aa_{\phi_i,\pi}(P)$, that is, 
\[
P=\bigcup_{i=1}^q \Aa_i.
\] Besides, the closure $\bar\Aa_i$  of each $\Aa_i$ is a finite union of rational polytopes.
\end{theorem}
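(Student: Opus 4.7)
My plan is to reduce Theorem~\ref{thm: main} to the finiteness of log canonical models for \emph{big} boundaries, namely \cite[Corollary~1.1.5]{BCHM10}, by combining the relative Iitaka fibration with the canonical bundle formula. The hypothesis that $P$ lies in the \emph{relative interior} of $Q$ is what permits small perturbations of $\De\in P$ without leaving $Q$, thereby preserving both klt-ness and $\ka\geq 0$ throughout the argument.

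First, since $\ka(K_X+\De;X/U)\geq 0$ for every $\De\in Q$, one has $K_X+\De\sim_{\Rr} D_\De\geq 0$ over $U$. I would then subdivide $P$ into finitely many rational subpolytopes on each of which the relative log Iitaka fibration $f:\tilde X\to Y/U$ (taken on a common log resolution dominating $X$) can be chosen constant. This is the principal technical step of the proof: it amounts to showing that as $\De$ varies in a rational polytope of boundaries with $\ka\geq 0$, only finitely many Iitaka fibrations occur. My expectation is to obtain this via a $(K_X+\De)$-MMP with scaling over $U$, invoking the finiteness of weak log canonical models from \cite{BCHM10} on an enlarged polytope obtained by adding a small ample perturbation, and then passing to the Iitaka quotient cell-wise.

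On each subcell $P'$ with Iitaka fibration $f$ fixed, the canonical bundle formula of Fujino--Mori descends $K_{\tilde X}+\tilde\De$ to a pair $(Y, B^\De_Y+M^\De_Y)$ on the base, whose log canonical divisor is big over $U$ and depends in an affine manner on $\De$. Applying \cite[Corollary~1.1.5]{BCHM10} on $Y$ then gives a finite decomposition of $P'$ into log canonical model loci. Since the log canonical model of $(X,\De)/U$ factors through that of $(Y, B^\De_Y+M^\De_Y)/U$, this decomposition lifts back to $X$; summing over the finitely many subcells produces the partition $P=\bigcup_i\Aa_i$.

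For the polyhedrality assertion, each $\Aa_i$ is cut out by the condition that a specific $\Rr$-divisor on $Z_i$, depending affinely on $\De$, be ample over $U$---a finite conjunction of open rational linear inequalities. Hence $\bar\Aa_i$ is a finite union of rational polytopes. The main obstacle throughout is the first step: constructing a cell-wise constant Iitaka fibration uniformly on rational subcells of $P$ when the log canonical divisor is only pseudo-effective and has vanishing numerical dimension in some directions. Once that uniform Iitaka structure is secured, the rest is a fairly mechanical application of the canonical bundle formula together with BCHM.
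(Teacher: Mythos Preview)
Your overall architecture---pass to a uniform Iitaka fibration, descend via the canonical bundle formula, then invoke \cite[Corollary~1.1.5]{BCHM10} on the base where the divisor has become big---is exactly the paper's strategy. However, you misidentify the main technical step, and your proposed resolution of it does not work.

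You write that the principal difficulty is to subdivide $P$ into finitely many rational subpolytopes on each of which the Iitaka fibration is constant, and you propose to obtain this via an MMP with scaling and the finiteness of weak log canonical models from \cite{BCHM10}. But those finiteness results require a general ample part in the boundary (equivalently, bigness), which is precisely what is absent here; this is the whole point of the theorem. So the proposed mechanism is circular, and you correctly flag it as the ``main obstacle'' without resolving it.

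The paper avoids this entirely: there is no need for any subdivision, because a \emph{single} Iitaka fibration works uniformly for all $\De\in P$. This is Proposition~\ref{prop: uniform Iitaka fibration}, and its proof is elementary. First, $\ka(K_X+\De;X/U)$ is constant on $Q^\circ$ by convexity (Proposition~\ref{prop: same kodaira dim in the interior of a polytope}): if $\De$ attains the maximal Kodaira dimension and $D\in Q^\circ$ is arbitrary, write $D=r\De+(1-r)B$ with $B\in Q^\circ$, and use $\ka(D|_F)\geq\max\{\ka(\De|_F),\ka(B|_F)\}$. Second, if $D\leq B$ have the same relative Kodaira dimension, then an Iitaka fibration for $D$ is also one for $B$: general fibers of the $D$-Iitaka map must have $\ka(B|_F)=0$ (else the Kodaira dimension of $B$ would jump), and then Lemma~\ref{le: contract to point} gives a birational factorization. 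Chaining along a monotone path between any two points of $P$ finishes the argument. No MMP is needed.

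Once the Iitaka fibration $f:Y\to Z$ is fixed, the paper handles the canonical bundle formula by applying it only at the \emph{vertices} $\De_i$ of $P$ and then checking (Proposition~\ref{prop: compare canonical bundle formula for Q divs}) that the redundant parts $B^{\De_i}$ combine linearly, so that the induced boundary on $Z$ depends affinely on $\De$; this is slightly more delicate than your phrase ``depends in an affine manner'' suggests, but is the same idea. After that, your description and the paper's coincide, including the polyhedrality, which the paper obtains simply by pulling back the BCHM decomposition on $Z$ along the linear map $\vartheta:P\to V_A$.
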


This result is an analogy of \cite[Corollary 1.1.5]{BCHM10} where $\De \in A+V$ with $A$ a general ample divisor and $V$ a finite dimensional affine subspace of divisors. See Remark \ref{rmk: relatively compact} for the reason to work with the relatively compact subset $P$ and Remark \ref{rmk: constant Iitaka is enough} for a variant of Theorem \ref{thm: main}. Theorem \ref{thm: main} is indeed about finiteness of \emph{marked} log canonical models, that is, not only the variety $Z_i$ itself is finite, but also the map $X \dto Z_i/U$ is finite. Although finiteness of minimal models are expected for a given klt pair, finiteness of marked minimal models may not hold true in general.

For a klt pair $(X, \De)$ over $U$, we assume that $K_X+\De$ is $\Rr$-Cartier with $X \to U$ a projective morphism. As a consequence of Theorem \ref{thm: main}, we can answer the aforementioned problem in the relative setting.

\begin{corollary}\label{cor: canonical model for R-div}
Suppose that $(X, \De)$ is a klt pair over $U$ with $\ka(K_X+\De/U) \geq 0$, then $(X, \De)$ has a log canonical model over $U$.
\end{corollary}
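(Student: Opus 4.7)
The plan is to apply Theorem~\ref{thm: main} to a rational polytope $P$ containing $\De$. The key construction is a convex set $Q \subset \WDiv(X)_{\Rr}$ of klt boundaries $\De'$ with $\ka(K_X+\De'/U) \geq 0$, such that $\De$ lies in its relative interior. Once this is done, rational points are dense in $\operatorname{relint}(Q)$, so a rational polytope $P \subset \operatorname{relint}(Q)$ with $\De \in P$ is obtained by choosing rational divisors near $\De$ that surround it. Theorem~\ref{thm: main} then partitions $P = \bigcup_{i=1}^q \Aa_i$, and the $\phi_i : X \dto Z_i / U$ with $\De \in \Aa_i$ is, by definition of $\Aa_{\phi_i, \pi}(P)$, the desired log canonical model of $(X, \De)/U$.

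To construct $Q$, I would first fix an effective $\Rr$-divisor $E \geq 0$ with $K_X+\De \sim_{\Rr, U} E$, provided by the hypothesis $\ka(K_X+\De/U) \geq 0$. Let $V \subset \WDiv(X)_{\Rr}$ be the finite-dimensional $\Rr$-subspace spanned by the prime components of $\De$ and of $E$, and set
\[
Q = \{\De' \in V : \De' \geq 0,\ (X, \De') \text{ is klt},\ K_X+\De' \sim_{\Rr, U} \text{ effective}\}.
\]
Convexity of $Q$ is immediate: positivity and the klt property are preserved under convex combinations, and if $K_X + \De_j' \sim_{\Rr, U} E_j \geq 0$ for $j = 1, 2$, then $K_X + (t \De_1' + (1-t)\De_2') \sim_{\Rr, U} t E_1 + (1-t) E_2 \geq 0$ for all $t \in [0, 1]$. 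By construction, $\De \in Q$.

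The main obstacle is to show that $\De$ lies in $\operatorname{relint}(Q)$. For a small perturbation $\De' = \De + \gamma$ in $V$, effectivity and klt are open conditions; the delicate issue is preserving $K_X+\De' \sim_{\Rr, U} E + \gamma \sim_{\Rr, U} \text{effective}$. A sufficient condition is $\operatorname{Supp}\gamma \subseteq \operatorname{Supp} E$, for then $E + \gamma$ itself stays effective for small $\gamma$; thus the task reduces to arranging $\operatorname{Supp} E \supseteq \operatorname{Supp}\De$, after which every $\gamma \in V$ is supported on $\operatorname{Supp} E$. This can be achieved by modifying $E$ within its $\Rr$-linear equivalence class via a suitable $\Rr$-combination of principal divisors (or by working instead with a larger $V$ capturing every admissible perturbation direction). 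Granting this, rational approximation of $\De$ by vertices inside $\operatorname{relint}(Q)$ produces $P$, and Theorem~\ref{thm: main} completes the proof.
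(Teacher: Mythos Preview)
Your strategy---construct a convex $Q$ with $\De$ in its relative interior and invoke Theorem~\ref{thm: main}---is the same as the paper's. The gaps are in the construction of $Q$, and they are genuine.

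First, you define $Q$ via the condition ``$K_X+\De'\sim_{U,\Rr}$ effective'', but Theorem~\ref{thm: main} requires $\ka(K_X+\De';X/U)\geq 0$ in the sense of Definition~\ref{def: kod}. For $\Rr$-divisors these are not the same (this is exactly the phenomenon of Remark~\ref{rmk: choi}): one can have $K_X+\De'\sim_{\Rr}0$ while $\pi_*\Oo_X(\lf m(K_X+\De')\rf)=0$ for every $m>0$. So your $Q$ may contain boundaries to which Theorem~\ref{thm: main} does not apply, and Proposition~\ref{prop: Choi} does not rescue you (it needs either $K_X+\De'\geq 0$ or $\ka(K_X+\De')\geq 0$ as input, neither of which you have).

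Second, the step ``modify $E$ so that $\Supp E\supseteq\Supp\De$'' is not achievable in general. If $K_X+\De\sim_{U,\Rr}0$, every effective representative is $0$, so $\Supp E=\emptyset$, yet $\De$ may be nonzero. The alternative you offer (``a larger $V$'') does not help either: enlarging $V$ only introduces new boundary facets of the effective cone on which $\De$ sits.

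The paper resolves both issues with one device. After passing to a resolution (so $K_X$ is Cartier), pick $m$ with $\pi_*\Oo_X(\lf m(K_X+\De)\rf)\neq 0$ and take
\[
Q=\{\,D\in R : D\geq \lf m\De\rf/m\,\},\qquad R=\{\,D\geq 0:\ (X,D)\ \text{lc},\ \Supp D\subseteq\Supp\De\,\}.
\]
This is a \emph{rational} polytope, and every $D\in Q$ satisfies $\lf m(K_X+D)\rf\geq \lf m(K_X+\De)\rf$, hence $\ka(K_X+D;X/U)\geq 0$ directly from the definition---no detour through $\Rr$-linear equivalence. If $\De$ is a $\Qq$-divisor, one argues as in the proof of Theorem~\ref{thm: main} (Iitaka fibration plus canonical bundle formula) to reduce to the big klt $\Qq$-case handled by \cite{BCHM10}. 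If $\De$ is not a $\Qq$-divisor, then being irrational it lies in the relative interior of the minimal face of the rational polytope $Q$ containing it; replace $Q$ by that face, choose $P\subset Q^\circ$ a small rational polytope around $\De$, and apply Theorem~\ref{thm: main}.
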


The type of results as Theorem \ref{thm: main} was studied by Shokurov which was called the geography of log models (see \cite[\S 6]{Sho96}). A stronger form of Theorem \ref{thm: main} has already been established in \cite{SC11} assuming the log minimal model program and the abundance conjecture. By \cite[Theorem 3.4]{SC11}, a rational polytope can be decomposed into a finite union of rational polytopes by the equivalence of the weak log canonical models. Then the abundance conjecture implies that the log canonical models are also finite. In fact, \cite{SC11} uses the above decomposition to show that the abundance conjecture for $\Rr$-divisors follows from that for $\Qq$-divisors (see \cite[Corollary 2.2]{SC11})

We briefly explain the idea for the proof of Theorem \ref{thm: main}. First, we consider the relative Iitaka fibration (see Definition \ref{def: Iitaka fibration}) associated with $K_X+\De$ for $\De \in Q$. The key observation is that an Iitaka fibration $X \dto Z$ can be taken uniformly for any $\De \in P \subset Q^\circ$ (see Proposition \ref{prop: uniform Iitaka fibration}). Then, applying the canonical bundle formula in \cite{FM00}, the log canonical model for $(X, \De)$ can be related to the ample model for $(Z, D_Z^\De+M_Z^\De)$. Finally, by standard modifications, the result follows from \cite[Corollary 1.1.5]{BCHM10}.

The paper is organized as follows. In Section \ref{sec: preliminaries}, we give background material. In Section \ref{sec: relative Kodaira dim}, we develop relative Iitaka fibrations and relative Kodaira dimensions. Most results in this section are standard and should be well-known. Unfortunately, we find it lack of adequate reference. Hence, we include a systematical treatment of this topic in Section \ref{sec: relative Kodaira dim}. Theorem \ref{thm: main} and Corollary \ref{cor: canonical model for R-div} are proved in Section \ref{sec: finite ample models}. 

\medskip

\noindent\textbf{Acknowledgements}.
This work is motivated by expanding the results in \cite{Li20} to real divisors. When preparing the paper, it appears \cite{Jia20} which also obtains some of the results in the paper independently. This work is partially supported by a starting grant from SUSTech.

\section{Preliminaries}\label{sec: preliminaries}

\subsection{Notation and conventions}\label{subsec: Notation and conventions}
By polytope, we mean a convex hull of finite set of points in a real vector space. If the points are rational points, then the polytope is called rational. In particular, a polytope is assumed to be closed. 

For a birational morphism $f: Y \to X$ and a divisor $B$ on $X$, $f_*^{-1}(B)$ denotes the strict transform of $B$ on $Y$, and $\Exc(f)$ denotes the sum of reduced exceptional divisors of $f$. We write $f: Y \to X/U$ if $f$ is a morphism over $U$. We say that $f$ is a contraction morphism if $f_*\Oo_Y = \Oo_X$. If $f$ is a surjective morphism, then for a divisor $D$ on $Y$, we write $D^h$ (resp. $D^v$) to denote the horizontal (resp. vertical) part of $D$ over $Z$. We write $D=D_+- D_-$ for the decomposition of $D$ as a difference of effective divisors without common components.

For $k=\Zz, \Qq, \Rr$, and two divisors $A, B \in k$ on a variety $X$ over $Z$, $A \sim_{Z,k} B$ means that $A$ and $B$ are $k$-linearly equivalent over $Z$. When $k=\Zz$ or $Z=\spec \Cc$, we omit $k$ or $Z$. A general (resp. very general) point is a point which lies in the complement of finite (resp. countably many) proper Zariski closed subsets.

Let $X$ be a projective normal variety over $U$ and $B$ be an $\Rr$-divisor on $X$, then $(X, B)/U$ is called a log pair over $U$. We also assume that $K_X+B$ is an $\Rr$-Cartier divisor for a log pair $(X, B)/U$. Besides, $U$ is usually omitted if this is clear from the context. For a divisor $D$ over $X$, if $f: Y \to X$ is a birational morphism from a normal variety $Y$ such that $D$ is a divisor on $Y$, then the log discrepancy of $D$ with respect to $(X, B)$ is defined to be $\mult_{D}(K_Y-f^*(K_X+B))+1$. This definition is independent of the choice of $Y$. A log pair $(X, B)$ (or $K_X+B$) is called sub-klt (resp. sub-lc) if the log discrepancy of any divisor over $X$ is $>0$ (resp. $\geq 0$). If $B \geq 0$, then a sub-klt (resp. sub-lc) pair $(X, B)$ is called klt (resp. lc). 

\subsection{Ample model and log canonical model}\label{subsec: ample model}

We adopt the notions of ample models and log canonical models as in \cite[Definition 3.6.5, 3.6.7]{BCHM10}.

\begin{definition}[{Ample model \& log canonical model }]\label{def: ample and canonical model}
Let $\pi: X\to U$ be a projective morphism between normal quasi-projective varieties and $D$ be an $\Rr$-Cartier divisor on $X$. We say that $g: X \dto Z$ is the ample model for $D$ over $U$ if $g$ is a rational map over $U$, $Z$ is normal and projective over $U$ and there is an ample divisor $H$ over $U$ on $Z$ such that if $p: W \to X$ and $q: W \to Z$ resolve $g$, then $q$ is a contraction morphism and we may write $p^*D \sim_{U, \Rr} q^*H+E$, where $E \geq 0$ and for every $B \in |p^*D/U|_{\Rr}$, then $B \geq E$. If $D=K_X+\De$, where $(X, \De)$ is an lc pair, then the ample model for $K_X+\De$ over $U$ is called the log canonical model for $K_X+\De$ over $U$.
\end{definition}

\begin{remark}
1. By definition, for any $D'$ and $\lambda \in \Rr_{>0}$ such that $D' \sim_{U, \Rr} \lambda D$, $D'$ and $D$ share the same ample model if one of them exists.

2. For a given $X/U$, ample models of $D$ over $U$ are unique up to isomorphism (see \cite[Lemma 3.6.6 (1)]{BCHM10}).
\end{remark}

\subsection{Canonical bundle formula}\label{subsec: canonical bundle formula}

We use the relative version of the following canonical bundle formula proved in \cite{FM00}. 

Let $f: Y \to Z/U$ be a surjective and projective morphism of a normal variety $Y$ to a nonsingular variety $Z$. Assume that $Z$ is projective over $U$ and $(Y, \De)$ is klt with $K_Y+\De$ a $\Qq$-Cartier divisor. Suppose that the generic fiber $F$ of $f$ is a geometrically irreducible variety with $\ka((K_Y+\De)|_F)=0$. By \cite[Proposition 4.2]{FM00}, there exist $b \in \Zz_{>0}$ and a $\Qq$-divisor $\Theta$ such that
 \begin{equation}\label{eq: canonical bundle relation}
 b(K_Y+\De) \sim f^*(b(K_Z+\Theta))+bB^\De,
 \end{equation} where $B^\De= B^\De_+-B^\De_-$ satisfies that $f_*\Oo_Y(\lf kB^\De_+ \rf)=\Oo_Z$ for any $k \in \Zz_{\geq 0}$ and $\codim f(B^\De_-) \geq 2$. Notice that $K_Z+\Theta$ is unique up to $\Qq$-linearly equivalence. Moreover, for a prime divisor $P$ on $Z$, let $t_P^\De$ be the log canonical threshold of $f^*P$ with respect to $(X, \De - B^\De)$ over the generic point $\eta_P$ of $P$. Set $s_P^\De = 1- t_P^\De$, then 
 \begin{equation}\label{eq: divisorial part}
 D^\De_{Z} \coloneqq \sum_P s_P^\De P
 \end{equation} is called the divisorial part, and 
 \begin{equation}\label{eq: moduli part}
 M^\De_{Z} \coloneqq \Theta -  D_{Z} 
 \end{equation} is called the moduli part (it is called the log-semistable part in  \cite[Definition 4.3]{FM00}). In fact, by the discussion before \cite[Theorem 2.7]{Amb04} and  \cite[Proposition 4.6]{FM00}, there are b-divisors $\mathbf D^\De$ and $\mathbf M^\De$ such that $D^\De_{Z}$ and $M^\De_{Z}$ are traces of the respective b-divisors on $Z$ (see  \cite[\S 1.2]{Amb04} or \cite[\S 2.3]{Cor07} for notions of b-divisors.)

 After taking certain model $f': Y' \to Z'/U$ birational to $f: Y \to Z/U$ (see \cite[\S 4.4]{FM00} for the precise definition of $f'$), the corresponding moduli part $M^\De_{Z'}$ is nef over $U$. In fact, \cite[Lemma 5.2 (5)]{Amb04} shows that $M^\De_{Z'}$ is semi-positive. Hence, for any complete curve $C$, $M^\De_{Z'} \cdot C \geq 0$. Because $Z'$ is projective over $U$, this implies that $M^\De_{Z'}$ is nef over $U$. Moreover, $\mathbf M^\De$ is b-nef/$U$ (see \cite[Theorem 2.7]{Amb04}).
 
 \begin{lemma}\label{le: redundant part equal}
Assume that $f: Y \to Z$ is a projective morphism and 
 \[
 f^*L + B \sim f^*G+ C
 \] with $L, G$ Cartier divisors.  Suppose that $B, C$ satisfy the property that  $B=B_+-B_-, C=C_+-C_-$ with $f_*\Oo_Y(\lf kB_+ \rf)=f_*\Oo_Y(\lf kC_+ \rf)=\Oo_Z$ for any $k \in \Zz_{\geq 0}$, and $\codim f(B_-) \geq 2, \codim f(C_-) \geq 2$. Then $B= C$.
 \end{lemma}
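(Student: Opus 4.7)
My plan is to first establish that $B - C = f^*M$ for some Cartier divisor $M$ on $Z$, and then to force $M = 0$ via a multiplicity analysis over each prime divisor of $Z$.

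For the first stage, the codimension conditions force $B_-$ and $C_-$ to be vertical (a horizontal prime component would give $\codim f(B_-) = 0$), so $B^h = (B_+)^h$ and $C^h = (C_+)^h$. Restricting the hypothesis $f^*L + B \sim f^*G + C$ to the generic fiber $Y_\eta$ yields $(B_+)^h|_{Y_\eta} \sim (C_+)^h|_{Y_\eta}$. Evaluating the pushforward hypothesis at the stalk $\eta$ gives $h^0(Y_\eta, \Oo(\lf k (B_+)^h|_{Y_\eta} \rf)) = 1$ for every $k \geq 0$, so $\ka((B_+)^h|_{Y_\eta}) = 0$ and $(B_+)^h|_{Y_\eta}$ is the unique effective member of its linear class; the same holds for $C$. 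Therefore $(B_+)^h|_{Y_\eta} = (C_+)^h|_{Y_\eta}$, so $B^h = C^h$ and $B - C$ is vertical. The $k=0$ case of the hypothesis also gives $f_*\Oo_Y = \Oo_Z$, hence $H^0(Y_\eta, \Oo) = K(Z)$. Writing $B - C = f^*(G - L) + (\phi)$ for some $\phi \in K(Y)^*$, the verticality of $B - C$ and of $f^*(G - L)$ forces $(\phi)$ to be vertical; then $\phi|_{Y_\eta}$ is a unit on the geometrically connected generic fiber, so $\phi = f^*\psi$ for some $\psi \in K(Z)^*$, yielding $B - C = f^*M$ with $M = (G - L) + (\psi)$ a Cartier divisor on $Z$.

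For the second stage, fix a prime divisor $P \subset Z$ and list the prime components of $f^*P$ dominating $P$ as $E_1, \dots, E_r$, with multiplicities $n_i = \mult_{E_i}(f^*P)$. Setting $b_i = \mult_{E_i}(B)$ and $c_i = \mult_{E_i}(C)$, the identity $B - C = f^*M$ gives $b_i - c_i = \mult_P(M) \cdot n_i$. A DVR computation over $R = \Oo_{Z, \eta_P}$ with uniformizer $t$ shows that $(f_*\Oo_Y(\lf k B_+ \rf))_{\eta_P} = t^{-m_k} R$ where $m_k = \lf \min_i \lf k b_i^+ \rf / n_i \rf$; requiring this stalk to equal $R$ for every $k \geq 0$ is equivalent to the existence of some $E_i$ with $b_i \leq 0$. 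If $\mult_P(M) > 0$, then for this $i$, $c_i = b_i - \mult_P(M) n_i < 0$, so $E_i$ is a component of $C_-$ and $P \subseteq f(C_-)$, contradicting $\codim f(C_-) \geq 2$; the case $\mult_P(M) < 0$ is symmetric via $C_+$'s pushforward condition and $\codim f(B_-) \geq 2$. Hence $\mult_P(M) = 0$ for every prime $P \subset Z$, so $M = 0$ and $B = C$. The principal technical step is the DVR computation, which relies on the contraction property $f_*\Oo_Y = \Oo_Z$ to reduce sections on $Y_P$ to elements of $\mathrm{Frac}(R)$.
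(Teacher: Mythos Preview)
Your proof is correct. Both your argument and the paper's split into a horizontal step (showing $B^h=C^h$ via Kodaira dimension zero on the fibre) and a vertical step; the horizontal arguments are essentially the same, except that you work on the generic fibre $Y_\eta$ while the paper uses a very general closed fibre. The vertical steps, however, differ substantively. The paper first deduces $L\sim G$, hence $B-C=\mathrm{div}(s)$ with $s$ constant along fibres, and then asserts that $f_*\Oo_Y(\lfloor kB_+\rfloor)=\Oo_Z$ forces $B_+^v$ to have \emph{no} component with codimension-one image in $Z$, so that $\codim f(B^v)\ge 2$ and the principal vertical divisor $B^v-C^v$ must vanish. That assertion is too strong as written: if $f^*P=E_1+E_2$ and $B_+=E_1$, the extra component $E_2$ already forces $f_*\Oo_Y(kE_1)=\Oo_Z$ even though $f(E_1)=P$ has codimension one. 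What the pushforward condition actually yields over each prime $P\subset Z$ is only that \emph{some} component $E_i$ of $f^*P$ has $\mult_{E_i}(B_+)=0$, and this is precisely what your DVR computation extracts. Combined with $\codim f(B_-)\ge 2$ (so every $b_i\ge 0$ over $P$) you obtain some $b_i=0$; then $b_i-c_i=\mult_P(M)\,n_i$ together with the symmetric conditions on $C$ forces $\mult_P(M)=0$. So your vertical argument is not merely a reformulation but in fact tightens a small imprecision in the paper's version while reaching the same conclusion.
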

 \begin{proof}
 By $f_*\Oo_Y(f^*L + B) = f_*\Oo_Y(f^*G + C)$ and $$f_*\Oo_Y(\lf kB_+ \rf)=f_*\Oo_Y(\lf kC_+ \rf)=\Oo_Z,$$ we have $L \sim G$ on $Z \backslash \Supp(f(B_-)\cup f(C_-))$. This implies $L \sim G$ on $Z$ by $\codim \Supp(f(B_-)\cup f(C_-)) \geq 2$. Hence by definition, there is $s \in K(Y)$ such that
 \[
 B + {\rm div}(s)= C .
 \] Thus $B_+-C_++ {\rm div}(s)=B_--C_-$, and $B_+|_F+{\rm div}(s)|_F=C_+   \geq 0$ for a very general fiber $F$. However, $f_*\Oo_Y(\lf k B_+\rf)=\Oo_Z$ for all $k \in \Zz_{\geq 0}$ implies that $H^0(F, \Oo_F(\lf k B_+\rf)=\Cc$. Thus $s$ must be a non-zero constant on a very general fiber. Hence $B^h=C^h$. By $f_*\Oo_Y(\lf k B_+\rf)=f_*\Oo_Y(\lf k C_+\rf)=\Oo_Z$ for all $k \in \Zz_{\geq 0}$, $B_+^v$ and $C_+^v$ do not have components whose images on $Z$ are of codimension $1$. Thus, by assumption, $\codim f(B^v_-+B^v_+) \geq 2, \codim f(C^v_-+C^v_+) \geq 2$. Because $B^v \sim C^v$, we have $B^v = C^v$.
 \end{proof}

\section{Relative Kodaira dimensions}\label{sec: relative Kodaira dim}

In this section, we assume that $\pi: X\to U$ is a projective morphism between varieties. Assume that $X$ is normal and $D$ is an $\Rr$-Weil divisor on $X$. If there exists $m \in \Zz_{>0}$ such that $\pi_*\Oo_X(\lfloor mD \rfloor) \neq 0$, then there is a rational map over $U$
\[
\phi_{U, |mD|}: X \dto \Pp_U(\pi_*\Oo_X(\lfloor mD \rfloor))
\] associated with $\pi^* \pi_*\Oo_X(\lfloor mD \rfloor) \to \Oo_X(\lfloor mD \rfloor)$, where $$ \Pp_U(\pi_*\Oo_X(\lfloor mD \rfloor)) \coloneqq {\underline{\proj}_U} ({\rm Sym}(\pi_*\Oo_X(\lfloor mD \rfloor))).$$ 

We say that $m$ is sufficiently large and divisible if $\phi_{U, |mD|}(X)$ achieves its maximal dimension and general fibers of $\phi_{U, |mD|}$ are connected. In fact, take $m$ such that $\dim \phi_{U, |mD|}(X)$ is maximal and replace $X$ by a resolution, we can assume that $\phi_{U, |mD|}$ is a morphism. Replacing $m$ by a multiple $km$ so that $\tau^*\Oo_{W_m}(k)$ is very ample on the Stein factorization $X \to W_{m, {\rm stein}} \xrightarrow{\tau} W_m$, we can assume that $\phi_{U, |mD|}$ has connected general fibers. In this case, because $k(X)$ is a regular extension of $k(W_m)$,  the generic fiber of $\phi_{U, |mD|}$ is geometrically integral (for example, see \cite[\S 3 Corollary 2.14 (c)]{Liu02}).

Let $u: X' \to X$ be a log resolution and set $D' =u_*^{-1}D+E$ where $E$ is a $u$-exceptional divisor with sufficiently large coefficients (for example, take a Cartier divisor $B>0$ on $X$ such that $B - D >0$, then $E$ can be taken to be $u^*B-u_*^{-1}B$). Then $u_*\Oo_{X'}(\lfloor mD'\rfloor) = \Oo_X(\lfloor mD \rfloor)$ for any $m \in \Zz_{\geq 0}$. Thus $\phi_{U, |mD'|}: X' \dto \Pp_U((\pi\circ u)_*\Oo_{X'}(\lfloor mD' \rfloor))$ has the same image as $\phi_{U, |mD|}$. In what follows, when taking a resolution of $X$, we always replace $D$ by $D'$ (for some $E$). Possible taking a further resolution, we can assume that $|\lfloor mD'\rfloor| = |P_m|+N_m$ such that $N_m$ is effective, $|P_m|$ is $\pi\circ u$-base point free and $\pi_*\Oo_X(\lfloor mD \rfloor) =(\pi\circ u)_*\Oo_{X'}(P_m)$. Then the morphism $f': X' \to \Pp_U(\pi_*\Oo_X(\lfloor mD \rfloor))$ associated with $(\pi\circ u)^*(\pi\circ u)_*\Oo_{X'}(P_m) \to \Oo_{X'}(P_m)$ resolves $\phi_{U, |mD|}$. Let $W_m \subset \Pp_U(\pi_*\Oo_X(\lfloor mD \rfloor))$ be the image of $X'$, then there is a divisor $A$ on $W_m$ which is very ample over $U$ such that $P_m=f'^*A$. For $\Rr$-divisors $D_1, D_2$, we write 
\begin{equation}\label{eq: gtrsim}
D_1 \gtrsim_{U, \Rr} D_2
\end{equation} if there exists an effective divisor $E \geq 0$ such that $D_1 \sim_{U, \Rr} D_2+E$. Hence the above shows that $\lf mD' \rf \gtrsim_{U, \Rr} f'^*A$.

\begin{definition}[{Relative Kodaira dimension, c.f.\cite[II. \S 3.c]{Nak04}}]\label{def: kod}
Let $\pi: X\to U$ be a projective morphism between normal varieties and $D$ be an $\Rr$-Weil divisor on $X$, then the relative Kodaira dimension is defined to be 
\[
\ka(D; X/U)\coloneqq 
\begin{cases}
  -\infty   \quad\quad\quad \text{if $\pi_*\Oo_X(\lfloor mD \rfloor)=0$ for any~}  m \in \Zz_{>0}, \\
  \max\{\dim \phi_{U, |mD|}(X) \mid m \in \Zz_{\geq 0}\} - \dim U ~  \text{otherwise}.
\end{cases}
\]
\end{definition}

When $U = \spec \Cc$, we write $\ka(D;X)$ or $\ka(D)$ instead of $\ka(D;X/\spec \Cc)$. Because $\pi_*\Oo_X(\lfloor mD \rfloor)$ is a torsion free sheaf and $\phi_{U, |mD|}(X)$ is irreducible if it is non-empty, by the above definition, for any non-empty open set $V \subset U$, we have $\ka(D; X/U) = \ka(D_V; X_V/V)$, where $X_V = X \times_U V$, and $D_V= D|_{X_V}$. 

\begin{remark}[{\cite[\S 2.2 Remark]{Cho08}}]\label{rmk: choi}
For $\Rr$-divisors $D$ and $B$ on $X$. Even if $D \sim_{U, \Rr} B$, the relative Kodaira dimensions may not be the same: consider a smooth projective variety $X$ over $\Cc$. Let $P$ be a non-trivial principle Cartier divisor. Set $D = a P$ for some $a \in \Rr \backslash \Qq$. Then $D \sim_{\Rr} B=0$. By $H^0(X, \Oo_X(\lf mD \rf)) = 0$ for any $m \in \Zz_{>0}$, we have $\ka(D) = 0$ which is not equal to $\ka(B) =1$. 
\end{remark}

To remedy the problem mentioned in Remark \ref{rmk: choi}, \cite[Definition 2.2.1]{Cho08} introduces the invariant Iitaka dimension. In particular, the following result holds.

\begin{proposition}[{\cite[Proposition 2.2.2]{Cho08}}]\label{prop: Choi}
If $\ka(D; X/U) \geq 0$, then for any $0 \leq D' \sim_{U, \Rr} D$, we have $\ka(D; X/U) = \ka(D'; X/U)$.
\end{proposition}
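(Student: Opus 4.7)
The plan is to construct an explicit $\Cc$-linear injection $\pi_{*}\Oo_{X}(\lf mD'\rf)\hookrightarrow\pi_{*}\Oo_{X}(\lf mD\rf+E)$ (together with its symmetric counterpart) for a fixed effective integral divisor $E$ independent of $m$, obtained by multiplying sections by a rational function realizing the $\Rr$-linear equivalence $D\sim_{U,\Rr}D'$. The assumption $\ka(D;X/U)\ge 0$ is then used to absorb the error divisor $E$ in the limit $m\to\infty$.

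\textbf{The injection.} Using $D\sim_{U,\Rr}D'$, fix a decomposition $D-D'=\sum_{i=1}^{k}r_{i}(f_{i})+\pi^{*}A$ with $r_{i}\in\Rr$, $f_{i}\in K(X)^{\times}$, and $A$ an $\Rr$-Cartier divisor on $U$. The term $\pi^{*}A$ only twists $\pi_{*}\Oo_{X}(\lf mD\rf)$ by the invertible sheaf $\Oo_{U}(\lf mA\rf)$ on the base and therefore leaves $\dim\phi_{U,|mD|}(X)$ unchanged; we may drop it. For each $m\in\Zz_{>0}$, consider the $\Cc$-linear injection $s\mapsto s\cdot\prod_{i}f_{i}^{-\lf mr_{i}\rf}$. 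Writing $\lf t\rf=t-\{t\}$ and using $\sum_{i}r_{i}(f_{i})=D-D'$ gives the identity
\[
\lf mD'\rf-\lf mD\rf+\sum_{i}\lf mr_{i}\rf(f_{i})=\{mD\}-\{mD'\}-\sum_{i}\{mr_{i}\}(f_{i}).
\]
Since $\{mD\}\le\sum_{P\in\Supp(D)}P$ coefficientwise and $\{mr_{i}\}(f_{i})\ge-(f_{i})_{-}$, the right hand side is bounded componentwise by the fixed effective integral divisor
\[
E:=\sum_{P\in\Supp(D)}P+\sum_{i=1}^{k}(f_{i})_{-},
\]
independent of $m$. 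Hence the multiplication factors through an injection $\pi_{*}\Oo_{X}(\lf mD'\rf)\hookrightarrow\pi_{*}\Oo_{X}(\lf mD\rf+E)$. Exchanging the roles of $D$ and $D'$ yields the analogous injection in the opposite direction with some other fixed effective divisor $E'$.

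\textbf{Passage to Kodaira dimensions.} Any linear inclusion $V\subset W\subset H^{0}(X,\Oo_{X}(L))$ satisfies $\dim\phi_{V}(X)\le\dim\phi_{W}(X)$, so the injection above gives $\dim\phi_{U,|mD'|}(X)\le\dim\phi_{U,|\lf mD\rf+E|}(X)$. To bound the right side by $\ka(D;X/U)+\dim U$, invoke $\ka(D;X/U)\ge 0$: a nonzero section of some $\pi_{*}\Oo_{X}(\lf m_{0}D\rf)$ furnishes an effective $\Qq$-divisor $\widetilde D\sim_{U,\Qq}D$. For $m$ divisible by $m_{0}$, $\lf mD\rf+E$ is $\Qq$-linearly equivalent over $U$ to $m\widetilde D+E$, and by the scaling invariance of $\ka$ together with the fact that $\widetilde D+E/m\to\widetilde D$ as $m\to\infty$, the supremum $\sup_{m}\dim\phi_{U,|\lf mD\rf+E|}(X)$ stabilizes at $\ka(\widetilde D;X/U)+\dim U=\ka(D;X/U)+\dim U$. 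The symmetric injection yields the reverse inequality, whence $\ka(D;X/U)=\ka(D';X/U)$.

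\textbf{Main obstacle.} The genuinely delicate step is the elimination of the fixed error divisor $E$ from the target of the injection. Without the hypothesis $\ka(D;X/U)\ge 0$ the class of $D$ need not contain any effective representative, and $\lf mD\rf+E$ can carry strictly more sections than any rational multiple of $\lf mD\rf$; this is precisely the pathology exhibited in Remark~\ref{rmk: choi}. The hypothesis is invoked exactly to provide an effective $\widetilde D$ against which the perturbation $E/m$ becomes negligible as $m\to\infty$, allowing the scaling argument to close the estimate.
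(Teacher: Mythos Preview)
The paper does not give its own proof of this proposition; it simply quotes \cite[Proposition 2.2.2]{Cho08}. So there is no in-paper argument to compare against, and I will assess your attempt directly.

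Your construction of the injection $\pi_{*}\Oo_{X}(\lf mD'\rf)\hookrightarrow\pi_{*}\Oo_{X}(\lf mD\rf+E)$ with a fixed effective $E$ is correct and is the standard opening move. The genuine gap is in the third step, where you absorb $E$. You claim that ``by the scaling invariance of $\kappa$ together with $\widetilde D+E/m\to\widetilde D$'' the supremum $\sup_{m}\dim\phi_{U,|\lf mD\rf+E|}(X)$ equals $\kappa(\widetilde D;X/U)+\dim U$. This inference is invalid: Iitaka dimension is not upper-semicontinuous under such limits. If $\Supp E\not\subseteq\Supp\widetilde D$, then $\lf m\widetilde D\rf+E$ is not dominated by any $\lf M\widetilde D\rf$, and $\kappa(\widetilde D+E/m)$ can remain strictly larger than $\kappa(\widetilde D)$ for every $m>0$ (take $\widetilde D$ a fibre class and $E$ a section on a product surface). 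Nothing in your construction forces $\Supp E\subseteq\Supp\widetilde D$, since $E$ contains the pole divisors $(f_i)_-$, whose components need not lie in the support of any effective member of $|D|_{\Rr}$. There is also a smaller slip: a nonzero section of $\pi_*\Oo_X(\lf m_0D\rf)$ yields an effective divisor linearly equivalent to $\lf m_0D\rf$, hence $\Qq$-linearly equivalent to $D-\{m_0D\}/m_0$, not to $D$; one should instead take $\widetilde D=(G+\{m_0D\})/m_0$, which is effective and satisfies $\widetilde D\sim_{\Qq}D$ but is typically not a $\Qq$-divisor.

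A correct way to close the argument replaces the limit by a rational-polytope sandwich. After passing to an effective representative so that $D\ge 0$ as well, set $D_t\coloneqq D-\sum t_i(f_i)$ and $V\coloneqq\{t\in\Rr^k: D_t\ge 0\}$; this is a rational polyhedron containing $0$ and $r=(r_i)$. For every rational $q\in V$ one has $D_q\sim_{\Qq}D$, whence $\kappa(D_q;X/U)=\kappa(D;X/U)$ by the elementary $\Qq$-case. Since $r$ lies on an open segment between two rational points $q,q'\in V$, one gets $\alpha D_q\le D'\le D_q+D_{q'}$ for some $\alpha\in(0,1)$, and monotonicity plus scaling invariance of $\kappa$ then give
\[
\kappa(D;X/U)=\kappa(\alpha D_q;X/U)\le\kappa(D';X/U)\le\kappa(D_q+D_{q'};X/U)=\kappa(2D;X/U)=\kappa(D;X/U).
\]
This bypasses any continuity claim and is essentially the argument in \cite{Cho08}.
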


Suppose that $\pi: X\to U$ is a projective morphism between normal varieties and $D$ is an $\Rr$-Weil divisor on $X$.  Let $F$ be a general fiber of $\pi$. We define the Kodaira dimension $\kappa(D|_F)$ of $D|_F$ to be $\ka(D|_{\Gamma})$, where $\Gamma$ is an irreducible component of $F$. Equivalently, suppose that $X \to U_{\rm stein} \xrightarrow{\tau} U$ is the Stein factorization and $G$ is a general fiber of $X \to U_{\rm stein}$. Then $\kappa(D|_F)= \kappa(D|_G)$. In fact, because $U_{\rm stein} \to U$ is a finite morphism between irreducible varieties, for any open set $V_s \subset U_{\rm stein}$, there is an open set $V \subset U$ such that $V_s \supset \tau^{-1}(V)$. Hence any irreducible component of $F$ is also a general fiber of $X \to U_{\rm stein}$. Thus $\ka(D|_F)$ is well defined and equals to $\ka(D|_G)$.

We will use the fact that the set of very general points is a complement of a measure zero set (because the base field is $\Cc$). Hence if $V$ and $V'$ are sets consist of very general points, then $V \cap V' \neq \emptyset$. The following is the relative version of \cite[II Lemma 1.12 (2)]{Nak04} which can be proved by the same argument.

\begin{lemma}[{\cite[II Lemma 1.12 (2)]{Nak04}}]\label{le: contract to point}
Suppose that $Y, X$ are normal varieties over $U$. Let $f: Y \dto X/U$ be a meromorphic fiber space and let $h: Y \dto Z/U$ be a meromorphic map such that $h(f^{-1}(x))$ is a point for general $x \in X$. Then there exists a meromorphic map $g: X \dto Z/U$ such that $h=g \circ f$.
\end{lemma}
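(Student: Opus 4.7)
The plan is to construct $g$ by a graph/rigidity argument on a common resolution of $f$ and $h$. First, I would take a smooth variety $W/U$ together with a proper birational morphism $p: W \to Y$ such that both $\tilde f \coloneqq f \circ p: W \to X$ and $\tilde h \coloneqq h \circ p: W \to Z$ are genuine morphisms over $U$; this is possible by simultaneously resolving the indeterminacy loci of $f$ and $h$. Because $f$ is a meromorphic fiber space, $\tilde f$ is surjective with geometrically integral (hence connected) general fibers, and by hypothesis $\tilde h$ contracts a general fiber of $\tilde f$ to a single point of $Z$.

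Next, form the morphism $\Phi \coloneqq (\tilde f, \tilde h): W \to X \times_U Z$ and let $T \subseteq X \times_U Z$ be its scheme-theoretic image, endowed with the two projections $\alpha: T \to X$ and $\beta: T \to Z$. The map $\alpha$ is proper and surjective because $\tilde f$ is. For $x$ in a suitable open subset of $X$ the set-theoretic fiber $\alpha^{-1}(x)$ is exactly $\tilde h(\tilde f^{-1}(x))$ (the fiber $\tilde f^{-1}(x)$ is proper, so its image in $Z$ is already closed), and by hypothesis this image is a single point. Hence $\alpha$ is a proper surjective generically one-to-one morphism of varieties in characteristic zero, and therefore birational, since $X$ is normal. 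Setting $g \coloneqq \beta \circ \alpha^{-1}: X \dto Z$ produces a meromorphic map over $U$, and on the open dense subset of $W$ over which $\alpha$ is an isomorphism onto its image one has $\tilde h = \beta \circ \Phi = g \circ \tilde f$; composing with the birational $p$ yields $h = g \circ f$ as meromorphic maps over $U$.

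The main obstacle — more bookkeeping than conceptual — is matching up several ``general'' loci on $X$: the locus where $\tilde h(\tilde f^{-1}(x))$ is a point, the locus where the scheme-theoretic fiber of $\alpha$ coincides with the set-theoretic one, and the flat locus of $\alpha$. Each is the complement of a proper closed (or at worst countable union of proper closed) subset, so the remark preceding the lemma that very general loci intersect lets us choose a single $x$ at which all conditions hold simultaneously. Beyond this, the proof is a transparent transcription of the absolute statement \cite[II Lemma 1.12 (2)]{Nak04}, because $X \times_U Z$ plays exactly the same role relative to $U$ as $X \times Z$ plays over $\spec \Cc$ in Nakayama's original argument.
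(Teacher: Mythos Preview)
The paper does not supply its own proof of this lemma: it simply records that the statement is the relative version of \cite[II Lemma 1.12 (2)]{Nak04} and remarks that it ``can be proved by the same argument.'' Your proposal is a correct and self-contained realization of that argument, and it is exactly the graph-closure/rigidity proof one expects from Nakayama's absolute case, carried out in $X \times_U Z$ rather than $X \times Z$. One small overclaim: the definition of a meromorphic fiber space in the paper only guarantees that general fibers of $\Gamma_f^\nu \to X$ are \emph{connected}, not geometrically integral, so after passing to the resolution $W$ you should only assert connectedness of the general fiber of $\tilde f$; this is harmless, since the only property you actually use is that $\tilde h$ sends such a fiber to a point, which is the hypothesis. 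Also, the properness of $\alpha: T \to X$ deserves a word: it follows because $\tilde f = \alpha \circ \Phi$ is proper and $\Phi: W \to T$ is proper surjective (the latter by the cancellation property, using that $\tilde f$ is proper and $X \times_U Z \to X$ is separated). With those two clarifications your argument is complete.
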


In the above lemma, $f$ is called a meromorphic fiber space if $Y, X$ are normal varieties, the graph map $\Gamma_f \to X/U$ is a proper morphism and any general fiber of $\Gamma_f^\nu \to X$ is connected, where $\Gamma_f^\nu \to \Gamma_f$ is the normalization. 

\begin{lemma}\label{le: compare images of iitaka fibration} 
Let $\pi: X\to U$ be a projective morphism between normal varieties and $D$ be an $\Rr$-Weil divisor on $X$. Suppose that $$\phi_{U, |mD|}: X \dto W_m \subset \Pp_U(\pi_*\Oo_X(\lfloor mD \rfloor)) $$ is a rational map. Assume that general fibers $F$ of $\phi_{U, |mD|}$ are connected and $\ka(D|_F)=0$. Then for any sufficiently large and divisible $k\in \Zz_{>0}$, there is a birational map $\tau_{m,k}: W_m \dto W_{k}/U$ such that $ \tau_{m,k} \circ\phi_{U, |mD|} = \phi_{U, |kD|}$.
\end{lemma}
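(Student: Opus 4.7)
The plan is to apply Lemma \ref{le: contract to point} with $f = \phi_{U, |mD|}: X \dto W_m$ and $h = \phi_{U, |kD|}: X \dto W_k$ to produce the rational map $\tau_{m,k}: W_m \dto W_k/U$ satisfying $\tau_{m,k} \circ \phi_{U, |mD|} = \phi_{U, |kD|}$, and then to verify that $\tau_{m,k}$ is birational. That $\phi_{U, |mD|}$ is a meromorphic fiber space is immediate from the hypotheses: general fibers are connected by assumption, and after passing to a log resolution on which $\phi_{U, |mD|}$ becomes a projective morphism, the graph of $\phi_{U, |mD|}$ is proper over $W_m$.

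The central step is to show that for very general $x \in W_m$, the fiber $F = \phi_{U, |mD|}^{-1}(x)$ is contracted to a single point by $\phi_{U, |kD|}$. Very generality of $x$ arranges that $F$ is not contained in the base locus of $|\lfloor kD \rfloor|$, no component of $\lfloor kD \rfloor$ contains $F$, and the identification $\lfloor kD \rfloor|_F = \lfloor kD|_F \rfloor$ holds. Any section $s \in H^0(X, \Oo_X(\lfloor kD \rfloor))$ not vanishing identically on $F$ then restricts to a nonzero element of $H^0(F, \Oo_F(\lfloor kD|_F \rfloor))$. The hypothesis $\ka(D|_F) = 0$ forces $h^0(F, \Oo_F(\lfloor kD|_F \rfloor)) \leq 1$, so all such restrictions are scalar multiples of a single section, meaning that the ratios of sections defining $\phi_{U, |kD|}$ are constant along $F$ on the open set where $\phi_{U, |kD|}$ is defined. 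Lemma \ref{le: contract to point} then yields the required $\tau_{m,k}: W_m \dto W_k/U$.

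For birationality, dominance of $\tau_{m,k}$ (inherited from $\phi_{U, |kD|}$) gives $\dim W_m \geq \dim W_k$; conversely, $k$ being sufficiently large and divisible makes $\dim W_k = \max_{k'} \dim \phi_{U, |k'D|}(X) \geq \dim W_m$, so the dimensions match and $\tau_{m,k}$ is generically finite. For a general $w \in W_k$, the preimage $\tau_{m,k}^{-1}(w)$ is the image under $\phi_{U, |mD|}$ of the general fiber $\phi_{U, |kD|}^{-1}(w)$, which is connected by sufficient divisibility of $k$; hence $\tau_{m,k}^{-1}(w)$ is connected, and being also finite, it is a single point. Thus $\tau_{m,k}$ is generically one-to-one and therefore birational.

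The main obstacle is the contraction step, where one must ensure that the passage to a general fiber $F$ behaves well with respect to the integer parts so that the restricted linear system $|\lfloor kD \rfloor|_F \subseteq |\lfloor kD|_F \rfloor|$ is well-defined, and then use the Kodaira-dimension-zero hypothesis to collapse it. A secondary technicality is the relative formulation: since $F$ lies in a fiber of $\pi: X \to U$ and the maps $\phi_{U, |k'D|}$ are defined via $\pi_*\Oo_X(\lfloor k'D \rfloor)$, the section argument should be read locally over an affine neighborhood in $U$, but this reduces essentially to the absolute argument applied fiber-by-fiber over $U$.
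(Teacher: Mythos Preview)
Your proof is correct and follows essentially the same approach as the paper's: both pass to a resolution, use the hypothesis $\kappa(D|_F)=0$ to see that sections of $\lfloor kD\rfloor$ restrict to a one-dimensional space on a general fiber $F$ of $\phi_{U,|mD|}$, invoke Lemma~\ref{le: contract to point} to produce the factoring map, and conclude birationality from the dimension equality plus connectedness of general fibers. The only cosmetic difference is that the paper routes through the auxiliary model $W_{mk}$ (building birational maps $W_m\dashrightarrow W_{mk}$ and $W_k\dashrightarrow W_{mk}$ and composing), whereas you go directly from $W_m$ to $W_k$; your version is slightly more streamlined but the underlying argument is the same.
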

\begin{proof}
First, replace $X$ by a resolution such that $p: X \to W_m/U, q: X \to W_{mk}/U$ and $r: X \to W_{k}/U$ are all morphisms. For a general fiber $F_p$ of $p$, $$H^0(F_p, \Oo_{F_p}(\lf mD \rf|_{F_p}))=\Cc$$ by construction, and thus $H^0(F_p, \Oo_{F_p}(\lf mkD \rf|_{F_p}))=\Cc$ by $k\lf mD \rf \leq \lf mkD \rf$ and $\ka(D|_{F_p})=0$. Thus $q(F_p)$ is a point of $W_{mk}$. By Lemma \ref{le: contract to point}, there exists $\psi: W_m \dto W_{mk}/U$ such that $\psi \circ p =q$. In particular, $\psi$ is dominant and has generic connected fibers. By the choice of $k$, $\dim W_m \leq \dim W_{mk}$, and thus  $\dim W_m = \dim W_{mk}$. This implies that $\psi$ is a birational map. By the same argument, we have a birational map $\varphi: W_k \dto W_{mk}/U$ such that $\varphi \circ r =q$. Thus $\tau_{m,k} \coloneqq \varphi^{-1} \circ \psi: W_m \dto W_{k}/U$ is birational and $ \tau_{m,k} \circ\phi_{U, |mD|} = \phi_{U, |kD|}$.
\end{proof}

\begin{lemma}\label{le: general fiber has Kod=0}
Under the above notation, suppose that $\pi_*\Oo_X(\lf aD \rf) \neq 0$ for some $a \in \Zz_{> 0}$. For a sufficiently large and divisible $n$, assume that $\phi': X' \to W_n/U$ is a resolution of the map $\phi=\phi_{U, |nD|}: X \dto W_n/U$. Then for any general fiber $G'$ of $\phi'$, we have $\ka(D'|_{G'})=0$, where $D'$ is the strict transform of $D$ plus an exceptional divisor over $X$ with sufficiently large coefficients (see the third paragraph in Section \ref{sec: relative Kodaira dim}).
\end{lemma}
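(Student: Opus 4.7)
The plan is to prove the two inequalities $\ka(D'|_{G'}) \ge 0$ and $\ka(D'|_{G'}) \le 0$ separately.

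For the lower bound, I would invoke the base point free decomposition established just before this lemma: for $n$ sufficiently large and divisible, $\lf nD' \rf \sim_{U,\Rr} \phi'^* A + N_n$ with $A$ very ample over $U$ on $W_n$ and $N_n \geq 0$ effective. Since a general fiber $G'$ of $\phi'$ is contracted to a point, $(\phi'^* A)|_{G'}$ is trivial, so $\lf nD' \rf|_{G'} \sim_\Rr N_n|_{G'} \geq 0$. For $G'$ in sufficiently general position (meeting the components of $D'$ transversally), one has $\lf nD' \rf|_{G'} = \lf nD'|_{G'} \rf$ as Weil divisors on $G'$, producing a nonzero section of $\Oo_{G'}(\lf nD'|_{G'} \rf)$ and hence $\ka(D'|_{G'}) \geq 0$.

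For the upper bound, I would argue by contradiction: suppose $\ka(D'|_{G'}) \geq 1$ for a very general $G'$, so there exists $m$ with $\dim \phi_{|\lf mD'|_{G'}\rf|}(G') \geq 1$. Set $\ell = kmn$ for $k$ a large divisible integer and consider $\phi_{U, |\ell D'|}: X' \dto W_\ell/U$. After multiplying by a fixed nonzero section of $\Oo_X(\lf (\ell - n) D \rf)$ (available from $\pi_* \Oo_X(\lf aD \rf) \neq 0$ together with Step~1), one obtains an inclusion $H^0(X', \lf n D' \rf) \hookrightarrow H^0(X', \lf \ell D' \rf)$ and hence a dominant rational map $W_\ell \dto W_n/U$. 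I would then argue that the assumption on $G'$ forces general fibers of $\phi'$ to have positive-dimensional image under $\phi_{U,|\ell D'|}$: sections of $\lf mD'|_{G'} \rf$ on general fibers spread out into relative sections of $(\phi')_*\Oo_{X'}(\lf \ell D' \rf)$ over a dense open of $W_n$, separating points within general fibers. Consequently $\dim W_\ell > \dim W_n$, contradicting the maximality $\dim W_n - \dim U = \ka(D;X/U)$ built into the choice of $n$.

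The main obstacle is the spreading-out argument in the upper bound step, namely converting the fiberwise positive-dimensional Iitaka image into genuine growth of $\dim W_\ell$. I would handle this via semicontinuity applied to $(\phi')_*\Oo_{X'}(\lf \ell D' \rf)$ over $W_n$: its generic rank is bounded below by $h^0(G', \Oo_{G'}(\lf \ell D'|_{G'} \rf))$, which grows in $\ell$ strictly faster than the rank contributed by the pull-back of $A$ alone. Then tracking the factorization $X' \dto \underline{\proj}_U (\pi\circ\phi')_*\Oo_{X'}(\lf \ell D' \rf)$ through $W_\ell$ and through the fiberwise Iitaka maps yields $\dim W_\ell \geq \dim W_n + \ka(D'|_{G'})$. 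Throughout, the real coefficients and floors are controlled by working over a sufficiently general $G'$ where restriction commutes with taking floors.
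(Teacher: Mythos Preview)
Your overall strategy matches the paper's: $\ka(D'|_{G'})\ge 0$ follows from the decomposition $\lf nD'\rf \sim \phi'^*A + N_n$ (the paper says only ``by construction''), and the upper bound is argued by contradiction with the maximality of $\dim W_n$. The difficulty, as you correctly identify, is converting $\ka(D'|_{G'})>0$ into $\dim W_\nu > \dim W_n$ for some $\nu$.

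The gap is in how you propose to close that step. You bound the generic rank of $(\phi')_*\Oo_{X'}(\lf\ell D'\rf)$ over $W_n$ and then assert that ``tracking the factorization through $W_\ell$ and the fiberwise Iitaka maps yields $\dim W_\ell \geq \dim W_n + \ka(D'|_{G'})$.'' But $W_\ell$ is the image of the map defined by $(\pi\circ\phi')_*\Oo_{X'}(\lf\ell D'\rf)$, i.e.\ by sections over open subsets of $U$, not of $W_n$. A high-rank pushforward to $W_n$ does not by itself yield relative-over-$U$ sections that separate points inside the fibres of $\phi'$; your growth-rate comparison offers no mechanism for that passage between bases. The displayed inequality is exactly the content to be proved, and it does not follow from the ingredients you have assembled (the multiplication-by-a-fixed-section step only gives the dominant map $W_\ell\dto W_n$, which is not in question).

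The paper fills this gap by an explicit divisor combination. First it works relative to $W_n$: cohomology-and-base-change over an open $V_m\subset W_n$ turns the two independent sections of $\lf mD\rf|_{G_t}$ into two sections of $\lf mD\rf$ over $\phi^{-1}(V_m)$, producing $\tilde\phi\colon X\to T_m/V_m$ with $\dim T_m > \dim V_m = \dim W_n$ and $\lf mD\rf \gtrsim \tilde\phi^*H$ for some $H$ very ample on $T_m/V_m$. To descend to $U$ one then adds a large multiple of the original $\lf nD\rf \gtrsim \phi^*A$: for $\ell\gg 0$ the divisor $H+\ell\mu^*A$ on $T_m$ (with $\mu\colon T_m\to V_m$) becomes ample over $U$, and $\lf(m+\ell n)D\rf \gtrsim_{U,\Rr} \tilde\phi^*(H+\ell\mu^*A)$. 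A further multiple $\tau$ then gives $\dim W_{\tau(m+\ell n)} \ge \dim T_m > \dim W_n$. This pairing of $H$ (encoding the fibrewise variation) with $\ell\mu^*A$ (supplying positivity over $U$) is precisely the ingredient your sketch lacks.
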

\begin{proof}
Without loss of generality, we can replace $X'$ by $X$ and $D'$ by $D$.  Let $A$ be a very ample divisor on $W_n$ over $U$ such that $\lf n D \rf \gtrsim_{U, \Rr}  \phi^*A$ (see \eqref{eq: gtrsim} for ``$\gtrsim_{U, \Rr} $'').  By construction, $\ka(D|_{G}) \geq 0$. Let $\emptyset \neq V_m \subset W_n$ be an open set such that $h^0(G_t, \Oo_{G_t}(\lf mD|_{G_t} \rf))$ is a constant for $t \in V_m$. By $\ka(D|_{G_t}) \geq 0$, for a sufficiently large and divisible $m$, $h^0(G_t, \Oo_{G_t}(\lf mD|_{G_t} \rf))  \geq 1$ for $t \in V_m$. However, if there exists $m$ such that $h^0(G_t, \Oo_{G_t}(\lf mD|_{G_t} \rf))  > 1$, we claim that there exists $\nu \gg 1$ such that $\dim W_{\nu} > \dim W_n$. 

By shrinking $V_m$ and $U$, there is an isomorphism $$\phi_*\Oo_X(\lf mD \rf) \otimes k(t) \simeq H^0(G_t,  \Oo_{G_t}(\lf mD|_{G_t}\rf))$$ for each $t\in V_m$ (see \cite[III Corollary 12.9]{Har77}). Such morphism is obtained by sending $$s \in H^0(\phi^{-1}(V_m), \Oo_X(\lf mD \rf))$$ to $s|_{G_t}$. By $h^0(G_t, \Oo_{G_t}(\lf mD|_{G_t} \rf))  > 1$, there are at least two linearly independent sections $s_1, s_2 \in H^0(\phi^{-1}(V_m), \Oo_X(\lf m D \rf))$. Thus $\tilde \phi = \phi_{V_m, \lf mD \rf}: X \dto T_m/V_m$ with $\dim T_m>\dim V_m$. 

Taking a higher resolution of $X$, we can assume that $\tilde \phi $ is a morphism and there is a divisor $H$ which is very ample on $T_m$ over $V_m$ such that $\lf mD \rf \gtrsim_{T_m, \Rr}  {\tilde\phi}^*H$. Let $\mu: T_m \to V_m$ and choose $\ell \gg 1$ such that $H +\ell\mu^*A$ is ample over $U$. Then $\lf (m+\ell n) D \rf \gtrsim_{U, \Rr}  \varphi^*H+\ell\phi^*A$ and thus there exists $\tau \gg 1$ such that $\dim \phi_{U, |\tau(m+\ell n) D|}(X) \geq \dim T_m$. Hence $$\dim W_{\tau(m+\ell n)} \geq \dim T_m > \dim V_m =\dim W_m,$$ and we can take $\nu=\tau(m+\ell n)$. This is a contradiction to the maximality of $\dim W_m$, and thus $\ka(D|_{G})= 0$.
\end{proof}

\begin{lemma}\label{le: very general implies general in kod k}
Under the above notation, for $k \in \Zz_{\geq 0}$, if $\ka(D|_{F_t})=k$ for very general $t\in U$, then $\ka(D|_{F_t})=k$ for general $t\in U$.
\end{lemma}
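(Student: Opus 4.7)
The plan is to identify $k$ with the relative Kodaira dimension $j := \ka(D;X/U)$ by computing $\ka(D|_{F_t})$ on a non-empty Zariski open $V \subset U$; the conclusion then follows because the set of very general points is dense in $V$.

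After replacing $\pi$ by its Stein factorization we may assume general fibers of $\pi$ are irreducible. The hypothesis $k \geq 0$ together with flat base change implies $\pi_*\Oo_X(\lf mD \rf) \neq 0$ for some $m$, so $j := \ka(D;X/U) \geq 0$. Choose $n$ sufficiently large and divisible so that, after replacing $X$ by a resolution (and $D$ accordingly, as in the third paragraph of this section), $\phi := \phi_{U,|nD|}: X \to W_n/U$ is a surjective morphism with $\dim W_n = j + \dim U$ and connected general fibers. Let $\pi_W : W_n \to U$ be the induced morphism.

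For the lower bound: on some non-empty open $V' \subset U$ the fiber $(W_n)_t$ is irreducible of dimension $j$, the restriction $\phi|_{F_t} : F_t \to (W_n)_t$ is surjective, and $\lf nD \rf|_{F_t} = \lf nD|_{F_t}\rf$; by flat base change $\phi|_{F_t}$ is given by a sub-linear system of $|\lf nD|_{F_t}\rf|$ whose image has dimension $j$, so $\ka(D|_{F_t}) \geq j$ for $t \in V'$. For the upper bound: given a general $t \in V'$, any general $s \in (W_n)_t$ is also general in $W_n$ (proper closed subsets of $W_n$ that dominate $U$ have fibers of dimension $< j$ over $U$, hence cannot contain all of $(W_n)_t$ for general $t$), so the general fiber $G = \phi^{-1}(s)$ of $\phi|_{F_t}$ is a general fiber of $\phi$; Lemma~\ref{le: general fiber has Kod=0} gives $\ka(D|_G) = 0$. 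Iitaka's easy addition applied to $\phi|_{F_t}$ (which extends to $\Rr$-Weil divisors by writing it for each integer divisor $\lf mD \rf$ and taking the maximum over $m$, using $\lf mD\rf|_{F_t} = \lf mD|_{F_t}\rf$ for general $t$) then yields $\ka(D|_{F_t}) \leq \ka(D|_G) + \dim(W_n)_t = j$, so $\ka(D|_{F_t}) = j$ on a non-empty open $V \subset V'$. Choosing any very general $t \in V$, the hypothesis gives $\ka(D|_{F_t}) = k$ while the above gives $\ka(D|_{F_t}) = j$, forcing $k = j$ and hence $\ka(D|_{F_t}) = k$ for every $t \in V$.

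\textbf{Main obstacle.} The upper bound is the only non-formal step. If one wishes to avoid quoting Iitaka's easy addition, this bound can be re-derived in the style of Lemma~\ref{le: general fiber has Kod=0}: supposing $\dim \phi_{|\lf mD|_{F_t}\rf|}(F_t) > j$ on some open subset of $U$, one combines the resulting relative fibration of $X$ with $\phi^*A$ for a relatively ample $A$ on $W_n$ over $U$ to obtain a rational map $X \dto T/U$ with $\dim T > j + \dim U$, contradicting the maximality of $\dim W_n$.
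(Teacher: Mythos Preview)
Your proof is correct and follows essentially the same strategy as the paper's: reduce to $\pi_*\Oo_X(\lf mD\rf)\neq 0$, pass to the relative Iitaka map $\phi:X\to W_n/U$, use Lemma~\ref{le: general fiber has Kod=0} to get $\ka(D|_G)=0$ for general fibers $G$ of $\phi$, and then show $\ka(D|_{F_t})=\dim W_n-\dim U$ for general $t$. The only difference is in this last step: the paper applies Lemma~\ref{le: compare images of iitaka fibration} to $\phi|_{F_t}$ to conclude that $\phi|_{F_t}(F_t)$ is birational to the Iitaka image $W_{t,m_t}$ of $D|_{F_t}$ (yielding both inequalities simultaneously), whereas you split the two directions, getting $\ka(D|_{F_t})\geq j$ from the sub-linear system and $\ka(D|_{F_t})\leq j$ from easy addition. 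Your route is marginally more elementary since it avoids the factorization lemma, while the paper's route gives the extra information that the fiberwise Iitaka fibration agrees birationally with the restriction of the global one.
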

\begin{proof}
By $k \geq 0$, we claim that there exists $m \in \Zz_{>0}$ such that $\pi_*\Oo_X(\lf mD \rf) \neq 0$. Otherwise, as above, there is an open set $U_m \subset U$ such that $$H^0(F_t,  \Oo_{F_t}(\lf mD|_{F_t}\rf)) \simeq \pi_*\Oo_X(\lf mD \rf) \otimes k(t) =0$$ for each $t \in U_m$. Thus for $t\in \cap_m U_m$, we have $\ka(D|_{F_t}) = -\infty$, a contradiction. 

Hence, we can consider $\phi=\phi_{U, |nD|}: X \dto W_n/U$ for a sufficiently large and divisible $n$. Taking a resolution of $X$, we can assume that $\phi$ is a morphism. By Lemma \ref{le: general fiber has Kod=0}, $\ka(D|_{G})=0$ for any general fiber $G$ of $\phi$.

Now for a general fiber $F_t$ of $\pi$, let $m_t$ be a sufficiently large and divisible integer such that $\phi_{|m_tD|_{F_t}|}: F_t \dto W_{t, m_t}$ is the corresponding rational map. For $\phi|_{F_{t}}: F_t \dto \phi|_{F_{t}}(F_t)$, a general fiber $G$ is also a general fiber of $\phi$ and thus $\ka(D|_G)=0$. By Lemma \ref{le: compare images of iitaka fibration}, $W_{t, m_t}$ is birational to $\phi|_{F_{t}}(F_t)$ and thus $$\ka(D|_{F_t})=\dim W_{t, m_t} = \dim \phi|_{F_{t}}(F_t)=\dim W_n-\dim U$$ for general $t \in U$. Finally, because $\ka(D|_{F_t})=k$ for very general $t$, $k=\dim W_n-\dim U$.
\end{proof}

\begin{remark}
I do not know whether Lemma \ref{le: very general implies general in kod k} still holds true when $k=-\infty$. Let $Z_m \subset U$ be the closed set such that $h^0(F_t, \Oo_{F_t}(\lf mD|_{F_t}\rf))>0$ for $t\in Z_m$. Then $Z = \cup_m Z_m$ is the set of points $t$ such that $\ka(D|_{F_t}) \neq -\infty$. A priori, it is possible that the Zariski closure of $Z$ is $X$.

However, in the most important case when $D = K_X$ on a smooth variety. By the invariance of the plurigenera (\cite{Siu02}), Lemma \ref{le: very general implies general in kod k} still holds true when $k=-\infty$.  
\end{remark}

\begin{lemma}\label{le: fiber kad positive}
Under the above notation, for a general fiber $F$ of $\pi$, if $\ka(D|_F)>0$, then $\ka(D;X/U)>0$.
\end{lemma}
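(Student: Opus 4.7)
The plan is to establish the stronger equality $\ka(D|_F) = \ka(D; X/U)$ for a general fiber $F$ of $\pi$, which yields the claim immediately. The argument closely parallels the proof of Lemma \ref{le: very general implies general in kod k}; the advantage here is that the hypothesis is already phrased for a general (rather than very general) fiber, so the passage from very general to general in that lemma is not required.

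First, I would note that $\ka(D|_F) > 0$ forces $H^0(F, \Oo_F(\lf mD|_F \rf)) \neq 0$ for some $m \in \Zz_{>0}$. By generic flatness and cohomology and base change on an open subset of $U$, this forces $\pi_*\Oo_X(\lf mD \rf) \neq 0$, so $\ka(D; X/U) \geq 0$. Hence the relative Iitaka map $\phi_{U, |nD|}: X \dto W_n/U$ is well defined for any sufficiently large and divisible $n$, and after passing to a log resolution (replacing $D$ by $D'$ as in the third paragraph of Section \ref{sec: relative Kodaira dim}) I may assume $\phi: X \to W_n/U$ is a morphism with connected general fibers.

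Next, Lemma \ref{le: general fiber has Kod=0} gives $\ka(D|_G) = 0$ for a general fiber $G$ of $\phi$. Restricting $\phi$ to a general fiber $F$ of $\pi: X \to U$, I obtain $\phi|_F: F \to (W_n)_t$, which is surjective by a dimension count; its general fibers are also general fibers of $\phi$, hence connected with $\ka(D|_G) = 0$. Applying Lemma \ref{le: compare images of iitaka fibration} to $F$ (with base a point) and the divisor $D|_F$, for any sufficiently large and divisible $k$ the Iitaka image $W_{F,k}$ is birational to $\phi|_F(F) = (W_n)_t$, so
\[
\ka(D|_F) \;=\; \dim (W_n)_t \;=\; \dim W_n - \dim U \;=\; \ka(D; X/U).
\]
Combined with $\ka(D|_F) > 0$, this gives $\ka(D; X/U) > 0$.

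The main technical point is to verify that restriction to a general fiber preserves the relevant data: connectedness and Kodaira dimension zero of general fibers of $\phi|_F$, surjectivity of $\phi|_F$ onto $(W_n)_t$, and the identification of $\ka(D|_F)$ with the analogous quantity on the log resolution (so that one may compute on $X'$). Each of these follows from standard generic flatness and smoothness arguments together with the conventions set up in Section \ref{sec: relative Kodaira dim}, so I do not anticipate a serious obstacle beyond careful bookkeeping.
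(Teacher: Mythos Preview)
Your argument is correct, but it takes a different route from the paper's proof. The paper gives a direct and elementary argument: after arranging $(X,D)$ to be log smooth over $U$, upper-semicontinuity and the hypothesis $\ka(D|_F)>0$ force the existence of some $n$ with $h^0(F_t,\Oo_{F_t}(\lf nD\rf|_{F_t}))>1$ on an open set of $U$; cohomology and base change then lifts two independent sections to local sections of $\pi_*\Oo_X(\lf nD\rf)$, so the image of $\phi_{U,|nD|}$ has dimension strictly greater than $\dim U$. No Iitaka fibration machinery is invoked.

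Your approach instead establishes the full equality $\ka(D|_F)=\ka(D;X/U)$ by passing to a model where $\phi_{U,|nD|}$ is a morphism, using Lemma~\ref{le: general fiber has Kod=0} on the fibers of $\phi$, and then applying Lemma~\ref{le: compare images of iitaka fibration} fiberwise. This is essentially the argument the paper uses in Lemma~\ref{le: very general implies general in kod k} and again (in a slightly different form) in Lemma~\ref{le: rel kod and kod of general fiber}. What you gain is a stronger conclusion and a uniform mechanism; what you lose is economy, since the paper's direct proof needs only semicontinuity and base change rather than the structure of the relative Iitaka fibration. There is no circularity: the lemmas you invoke all precede Lemma~\ref{le: fiber kad positive} in the paper. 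The one point to handle carefully is that $\phi|_F$ agrees with $\phi_{|nD|_F|}$ (so that Lemma~\ref{le: compare images of iitaka fibration} applies on $F$); this follows from cohomology and base change once $(X,D)$ is log smooth over $U$, as you indicate in your final paragraph.
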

\begin{proof}
We claim that there exist $n \in \Zz_{\geq 0}$ and an open set $U_n \subset U$ such that $h^0({F_t}, \Oo_{F_t}(\lf n D\rf|_{F_t}))>1$ for any $t\in U_n$. 

Taking a resolution of $X$ and shrinking $U$, we can assume that $(X, D)$ is log smooth over $U$ (i.e. $X$ as well as any stratum of $D$ is smooth over $U$). By upper-semicontinuity, there is an open set $U_m$ for each $m$, such that $$h^0(F_t, \Oo_{F_t}(\lf m D|_{F_t}\rf))=h^0({F_t}, \Oo_{F_t}(\lf m D\rf|_{F_t}))=k_m$$ is a constant for $t\in U_m$. If $k_m=0$ for $m \in \Zz_{>0}$, then for $t\in \cap_m U_m$, $\ka(D|_{F_t}) = -\infty$. This is a contradiction to $\ka(D|_F)>0$ for a general fiber $F$. If $k_m \leq 1$ for any sufficiently large and divisible $m \in J$, then for $t\in \cap_{m \in J} U_m$, $\ka(D|_{F_t}) = 0$, which is still a contradiction. Hence, there exists $n$ such that $k_n>1$. This shows the claim.

Possibly shrinking $U_n$, we can assume that $$\pi_*\Oo_X(\lf nD \rf) \otimes k(t) \simeq H^0(F_t,  \Oo_{F_t}(\lf nD|_{F_t}\rf))$$ holds for each $t\in U_n$. Then just as in the proof of Lemma \ref{le: very general implies general in kod k}, sections of $H^0(F_t,  \Oo_{F_t}(\lf nD|_{F_t}\rf))$ extend to local sections of $\pi_*\Oo_X(\lf n D\rf)$. By $h^0(F, \lf n D|_F\rf)>1$, the image of $\phi_{U, |nD|}$ has dimension larger than $\dim U$, that is, $\ka(D;X/U)>0$.
\end{proof}

\begin{definition}[{Relative $D$-dimension  \cite[Chapter II \S 3.c]{Nak04}}]\label{def: relative D-dim}
Let $\pi: X \to U$ be a projective morphism between varieties and $D$ be an $\Rr$-Weil divisor on $X$. The relative $D$-dimension $\ka_{\rm rel}(D; X/U)$ is defined to be the Kodaira dimension $\ka(D|_\Gamma)$ for a connected component  $\Gamma$ of a very general fiber of $\pi$.
\end{definition}

\begin{remark}\label{rmk: fibers}
1. A fiber of $\pi$ may not be connected. Suppose that $g: X \to V$ is the Stein factorization of $\pi$. Then $\Gamma$ is a very general fiber of $g$. 

2. Notice that by the upper-semicontinuity, $\ka(D|_\Gamma)$ is well-defined for a very general $\Gamma$. By Lemma \ref{le: very general implies general in kod k}, $\ka(D|_\Gamma)$ is a constant on an open set of $V$ if the relative $D$-dimension is non-negative.
\end{remark}

In the following, for a very general fiber $F$, we write $\ka(D|_F)$ to denote $\ka(D|_\Gamma)$ where   $\Gamma$ is a connected component  of $F$.

\begin{lemma}[{c.f. \cite[II Theorem-Definition 3.14]{Nak04}}]\label{le: rel kod and kod of general fiber}
Let $\pi: X\to U$ be a projective morphism between normal varieties and $D$ be an $\Rr$-Weil divisor on $X$. Suppose that the relative $D$-dimension $\ka_{\rm rel}(D; X/U) \geq 0$, then for a general fiber $F$ of $\pi$, we have $\ka(D;X/U) = \ka(D|_F)$.
\end{lemma}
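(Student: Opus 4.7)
The plan is to show that both $\ka(D;X/U)$ and $\ka(D|_F)$ equal the relative dimension of the image of a single relative Iitaka fibration.

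First I would pass to the Stein factorization $X \to V \to U$; by the remark just before Definition \ref{def: kod} and the convention adopted after Definition \ref{def: relative D-dim}, this reduces the problem to the case where general fibers of $\pi$ are connected. The hypothesis $\ka_{\rm rel}(D;X/U) \geq 0$ then says $\ka(D|_\Gamma) \geq 0$ for a very general $\Gamma$, and by Lemma \ref{le: very general implies general in kod k} the Kodaira dimension $\ka(D|_{F_t})$ equals a common value $k \geq 0$ for every $t$ in some dense open subset of $U$. As in the first paragraph of the proof of Lemma \ref{le: very general implies general in kod k}, this forces $\pi_*\Oo_X(\lf mD\rf) \neq 0$ for some $m > 0$, so the relative Iitaka map is non-trivial. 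I would then pick $n$ sufficiently large and divisible, take a log resolution $u:\tilde X \to X$, and replace $D$ by $\tilde D = u_*^{-1}D + E$ as in Section \ref{sec: relative Kodaira dim} so that $\phi = \phi_{U, |n\tilde D|}: \tilde X \to W_n$ is a morphism with connected general fibers.

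By Lemma \ref{le: general fiber has Kod=0}, every general fiber $G$ of $\phi$ satisfies $\ka(\tilde D|_G)=0$. Since $\phi$ is over $U$, for general $t \in U$ the image $\phi(\tilde F_t)$ is precisely the fiber of $W_n \to U$ over $t$, and the general fibers of the restriction $\phi|_{\tilde F_t}: \tilde F_t \to \phi(\tilde F_t)$ coincide with general fibers of $\phi$; in particular they have Kodaira dimension zero with respect to $\tilde D$ and are connected (after shrinking $U$). Using the relation $\lf n\tilde D\rf \gtrsim_{U,\Rr} \phi^*A$ for a very ample $A$ on $W_n/U$ (from the third paragraph of Section \ref{sec: relative Kodaira dim}), restricting to $\tilde F_t$ and applying Lemma \ref{le: compare images of iitaka fibration} within $\tilde F_t$ identifies $\phi|_{\tilde F_t}$ birationally with the absolute Iitaka map of $\tilde D|_{\tilde F_t}$. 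Since $\tilde F_t \to F_t$ is birational for general $t$ and $E|_{\tilde F_t}$ is exceptional for this map, $\ka(\tilde D|_{\tilde F_t}) = \ka(D|_{F_t}) = k$, so
\[
k \,=\, \dim \phi(\tilde F_t) \,=\, \dim W_n - \dim U \,=\, \ka(D;X/U),
\]
where the last equality follows from the maximality built into Definition \ref{def: kod} and the choice of $n$.

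The main obstacle is technical bookkeeping rather than conceptual depth: one must simultaneously shrink $U$ so that $\pi$, $\phi$ and their compositions behave as expected on general fibers (connected fibers of the correct dimension, surjectivity of $F_t$ onto the corresponding fiber of $W_n \to U$), and one must verify that passing to the log resolution and replacing $D$ by $\tilde D$ does not alter the Kodaira-dimension comparison on a general fiber, which reduces to noting that the extra exceptional divisor restricts to a divisor exceptional for the induced birational morphism $\tilde F_t \to F_t$.
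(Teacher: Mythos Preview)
Your argument is correct, but the route differs from the paper's own proof of this lemma. The paper establishes the two inequalities separately: for $\ka(D|_{F_\pi}) \leq \ka(D;X/U)$ it applies the easy addition formula \cite[II Theorem 3.13]{Nak04} to the restriction $\phi|_{F_\pi}: F_\pi \to G \subset W_m$, using $\ka(D|_{F_\phi})=0$ from Lemma~\ref{le: general fiber has Kod=0}; for the reverse inequality it takes the absolute Iitaka fibration $F_\pi \dashrightarrow V_r$, observes that its general fiber has Kodaira dimension zero and hence is contracted by $\phi|_{F_\pi}$, and then invokes Lemma~\ref{le: contract to point} to produce a dominant map $V_r \dashrightarrow G$, forcing $\dim V_r \geq \dim G$. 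Your approach instead identifies $\phi|_{\tilde F_t}$ birationally with the absolute Iitaka fibration of $\tilde D|_{\tilde F_t}$ in one stroke via Lemma~\ref{le: compare images of iitaka fibration}, which yields the equality directly without easy addition. This is exactly the mechanism already used inside the proof of Lemma~\ref{le: very general implies general in kod k} (see its final displayed equation), so in effect you have observed that the desired equality $\ka(D|_{F_t}) = \dim W_n - \dim U = \ka(D;X/U)$ is already contained there; the paper's proof of the present lemma gives an independent argument. One minor point: your invocation of Lemma~\ref{le: compare images of iitaka fibration} is applied to $\phi|_{\tilde F_t}$, which is a priori only defined by a sub-linear system of $|n\tilde D|_{\tilde F_t}|$ rather than the full one; strictly speaking the lemma is stated for $\phi_{U,|mD|}$, but the same proof goes through (and the paper makes the identical move in Lemma~\ref{le: very general implies general in kod k}), so this is not a gap.
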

\begin{proof}
First, by the same argument as in Lemma \ref{le: fiber kad positive}, we have $\pi_*\Oo_{X}(\lf mD \rf)=0$ iff $h^0(F, \Oo_{F}(\lf mD \rf|_F))=0$. Hence, if $\pi_*\Oo_{X}(\lf mD \rf)=0$ for any $m$, then $\ka(D|_F) = - \infty$ for very general $F$, which contradicts to $\ka_{\rm rel}(D; X/U) \geq 0$. Hence, $\ka(D;X/U) \neq -\infty$.

Suppose that $\phi=\phi_{U, |mD|}: X \dto \Pp_U(\pi_*\Oo_X(\lfloor mD \rfloor))$ for a sufficiently large and divisible $m$. Passing to a resolution, we can assume that $\phi$ is a morphism. Let $F_\pi$ be an irreducible component of a general fiber of $\pi$ and $G= \phi|_{F_\pi}(F_\pi)$. Then $G$ is an irreducible component of a general fiber of $W_m \to U$. Let $F_\phi$ be a general fiber of $\nu=\phi|_{F_\pi}: F_\pi \to G$ which is also a general fiber of $\phi$. By Lemma \ref{le: general fiber has Kod=0}, we have $\ka(D|_{F_\phi})=0$. By the easy addition (\cite[II Theorem 3.13]{Nak04}), $\ka(D|_{F_\pi}) \leq \ka(D|_{F_\phi}) + \dim G$, we have 
\begin{equation}\label{eq: compare dim}
\ka(D|_{F_\pi}) \leq \dim G = \dim W_m-\dim U = \ka(D; X/U).
\end{equation}

Conversely, for a suifficiently large and divisible $r$, let $\phi_{|rD|_{F_\pi}|}: F_\pi \dto V_r$ be a dominant rational map such that $\dim V_r = \ka(D|_{F_\pi})$. Taking a resolution of $F_\pi$, we can assume that $\phi_{|rD|_{F_\pi}|}$ is a morphism. Let $\Theta$ be a general fiber of $\phi_{|rD|_{F_\pi}|}$, then by Lemma \ref{le: general fiber has Kod=0}, $\ka(D|_\Theta) =0$. Recall that $\nu: F_\pi \to G$ is the morphism defined by restriction $\phi=\phi_{U, |mD|}$ to $F_\pi$. Hence $\ka(D|_\Theta) =0$ implies that $\nu(\Theta)$ is a point. By Lemma \ref{le: contract to point}, there exists $\rho: V_r \dto G$ such that $\nu= \rho \circ \phi_{|rD|_{F_\pi}|}$. In particular, $\dim G \leq \dim V_r$. Hence $$\ka(D|_{F_\pi}) = \dim V_r \geq \dim G = \ka(D; X/U)$$ which is the inverse inequality. 
\end{proof}

\begin{proposition}\label{prop: definition of kod dim coincides}
Let $\pi: X\to U$ be a projective morphism between normal varieties and $D$ be an $\Rr$-Weil divisor on $X$, then $\ka(D; X/U) = \ka_{\rm rel}(D; X/Y)$. 
\end{proposition}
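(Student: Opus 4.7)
The plan is to split into two cases according to whether $\ka_{\rm rel}(D; X/U) \geq 0$ or $\ka_{\rm rel}(D; X/U) = -\infty$. The nonnegative case will follow by concatenating the previous two lemmas, while the $-\infty$ case reduces to a short contradiction using upper-semicontinuity, which cleverly sidesteps the unresolved question raised in the remark after Lemma~\ref{le: very general implies general in kod k}.

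In the case $\ka_{\rm rel}(D; X/U) \geq 0$, I would first invoke Lemma~\ref{le: very general implies general in kod k} to promote the defining very-general-fiber equality to a general-fiber equality: on the Stein factorization $X \to V \to U$, a general fiber $F$ of $X \to V$ satisfies $\ka(D|_F) = \ka_{\rm rel}(D; X/U) \geq 0$. Then Lemma~\ref{le: rel kod and kod of general fiber} (applied to the composition through the Stein factorization, equivalently to $X \to V$, noting $\ka(D; X/U) = \ka(D; X/V)$ since $V \to U$ is finite) yields $\ka(D; X/U) = \ka(D|_F) = \ka_{\rm rel}(D; X/U)$.

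In the case $\ka_{\rm rel}(D; X/U) = -\infty$, I would argue by contradiction: suppose $\pi_*\Oo_X(\lf mD \rf) \neq 0$ for some $m \in \Zz_{>0}$. Replacing $X$ by a log resolution $u \colon X' \to X$ and $D$ by $D' = u_*^{-1}D + E$ for a sufficiently large exceptional divisor $E$ (as in the third paragraph of Section~\ref{sec: relative Kodaira dim}), and possibly shrinking $U$, one may assume $(X, \Supp D)$ is log smooth over $U$, so that $\lf mD \rf|_{F_t} = \lf mD|_{F_t} \rf$ on a nonempty open subset of $U$. By cohomology and base change on a further open $V \subset U$ one gets an isomorphism
\[
\pi_*\Oo_X(\lf mD \rf) \otimes k(t) \simeq H^0(F_t, \Oo_{F_t}(\lf mD|_{F_t} \rf))
\]
for every $t \in V$, and since the left-hand side is generically nonzero on $V$, we conclude $\ka(D|_{F_t}) \geq 0$ on a nonempty open subset of $U$. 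In particular there exist very general $t$ with $\ka(D|_{F_t}) \geq 0$, contradicting $\ka_{\rm rel}(D; X/U) = -\infty$.

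The only real subtlety is that Lemma~\ref{le: very general implies general in kod k} is not available in the $k = -\infty$ case, so the second case must be handled directly rather than by a symmetric application of the previous lemmas. The key observation that makes this painless is that we only need the easy direction of the correspondence: nonvanishing of a pushforward forces $h^0 \geq 1$ on an open subset of the base, which automatically catches very general points.
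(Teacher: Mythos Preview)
Your proposal is correct and follows essentially the same approach as the paper: the paper also splits into the case $\ka_{\rm rel}\geq 0$ (handled directly by Lemma~\ref{le: rel kod and kod of general fiber}, noting that a very general fiber is in particular general) and then observes that $\ka(D;X/U)\geq 0$ forces $\ka(D|_F)\geq 0$ for general, hence very general, fibers---exactly your contradiction in the second case. Your explicit invocation of Lemma~\ref{le: very general implies general in kod k} in the first case is harmless but redundant, and your log-smooth reduction plus base-change in the second case simply spells out what the paper leaves implicit.
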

\begin{proof}
Assuming that $\ka_{\rm rel}(D; X/Y)\geq 0$, then by Lemma \ref{le: rel kod and kod of general fiber}, we have $\ka(D; X/U) = \ka_{\rm rel}(D; X/Y)$. Assuming that $\ka(D; X/U) \geq 0$, then $\ka(D|_F) \geq 0$ for general fibers, and thus $\ka_{\rm rel}(D; X/Y)\geq 0$ by the definition of relative $D$-dimensions. Hence, we still have $\ka(D; X/U) = \ka_{\rm rel}(D; X/Y)$. In particular, this implies that $\ka(D; X/U) = -\infty$ iff $\ka_{\rm rel}(D; X/Y) = -\infty$.
\end{proof}

Under the above notation, let $\eta \in U$ be the generic point of $X$, and $\overline{\Oo}_{U,\eta}$ be the algebraic closure of ${\Oo}_{U,\eta}$. Let $\bar\eta = \spec\overline{\Oo}_{U,\eta}$, and $X_{\bar\eta}$ be the geometric fiber of $\pi$. Suppose that $X \to U_{\rm stein} \to U$ is the Stein factorization with $\eta_s$ the generic point of $U_{\rm stein}$. Let $D_{\bar\eta}$ and $D_{\bar\eta_s}$ be the pullbacks of $D$ to $X_{\bar\eta}$ and $X_{\bar\eta_s}$ respectively. We define $\ka(D_{\bar \eta})$ to be $\ka(D_{\bar \eta_s})$. Although $X_{\bar\eta}$ may not be connected, $X_{\bar\eta_s}$ is always integral (see explanations in the second paragraph of Section \ref{sec: relative Kodaira dim})

\begin{proposition}\label{prop: kod general and geometric fiber}
Let $\pi: X\to U$ be a projective morphism  between normal varieties and $D$ be an $\Rr$-Weil divisor on $X$. For a general fiber $F$ of $\pi$, we have $\ka(D|_F)=\ka(D_{\bar\eta})$.
\end{proposition}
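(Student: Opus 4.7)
The plan is to reduce via Stein factorization to the case where $\pi$ has geometrically integral generic fiber, and then compare the relative Iitaka fibration of $X/U$ restricted to a general fiber with its restriction to the geometric generic fiber by flat base change.

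First I would replace $U$ by the Stein factorization $U_{\rm stein}$ of $\pi$. By the conventions set up earlier in Section \ref{sec: relative Kodaira dim}, $\ka(D|_F)$ is computed on a connected component $\Gamma$ of $F$ (which is a fiber of $X \to U_{\rm stein}$), and $\ka(D_{\bar\eta})$ is defined as $\ka(D_{\bar\eta_s})$ for $\bar\eta_s$ the geometric generic point of $U_{\rm stein}$. After this reduction I may assume that $X_\eta$ is geometrically integral, so that general fibers of $\pi$ are already irreducible.

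Suppose first that $\ka(D;X/U) \geq 0$. For $m$ sufficiently large and divisible, pass to a log resolution as in the third paragraph of Section \ref{sec: relative Kodaira dim} so that $\phi = \phi_{U,|mD|}: X \to W_m$ is a morphism over $U$ with $W_m \subset \Pp_U(\pi_*\Oo_X(\lf mD\rf))$ and $\dim W_m - \dim U = \ka(D;X/U)$. Flat base change along $\spec\overline{K(U)} \to U$ yields
\[
\pi_*\Oo_X(\lf mD \rf) \otimes_{\Oo_U} \overline{K(U)} \;\simeq\; H^0\bigl(X_{\bar\eta},\, \Oo_{X_{\bar\eta}}(\lf mD \rf|_{X_{\bar\eta}})\bigr),
\]
so $(W_m)_{\bar\eta}$ is precisely the image of $X_{\bar\eta}$ under the Iitaka map of $\lf mD|_{X_{\bar\eta}}\rf$, and hence $\ka(D_{\bar\eta}) = \dim (W_m)_{\bar\eta} = \dim W_m - \dim U$. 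On the other hand, by Proposition \ref{prop: definition of kod dim coincides} combined with Lemma \ref{le: rel kod and kod of general fiber}, $\ka(D|_F) = \ka(D;X/U) = \dim W_m - \dim U$ for general $F$, giving the desired equality.

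For the remaining case $\ka(D;X/U) = -\infty$, the same flat base change identity forces $H^0(X_{\bar\eta}, \lf mD|_{X_{\bar\eta}}\rf) = 0$ for all $m > 0$, so $\ka(D_{\bar\eta}) = -\infty$; and Proposition \ref{prop: definition of kod dim coincides} yields $\ka(D|_F) = -\infty$ for very general $F$, which matches the assertion in the sense already acknowledged by the remark following Lemma \ref{le: very general implies general in kod k}. The main obstacle is verifying that the restriction $\phi|_{X_{\bar\eta}}$ genuinely coincides birationally with the Iitaka map of $D_{\bar\eta}$; this is delicate because $\lf mD \rf$ does not commute with pullback in general, but after the chosen log resolution and restriction to the geometric generic fiber, which meets no vertical component of $D$ and lies in the locus where $\Oo_X(\lf mD \rf)$ is locally free, the identity $\lf mD \rf|_{X_{\bar\eta}} = \lf m(D|_{X_{\bar\eta}})\rf$ holds and flat base change applies directly.
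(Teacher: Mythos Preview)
Your argument is correct and in fact cleaner than the paper's. Both proofs begin with the Stein factorization reduction and then, assuming $\ka(D;X/U)\ge 0$, base-change the relative Iitaka map $\phi_{U,|mD|}$ to $\bar\eta$. The paper, however, only records that $\phi_{\bar\eta}$ is defined by a \emph{sub}-linear system of $|mD_{\bar\eta}|$, which yields just the inequality $\ka(D|_F)\le \ka(D_{\bar\eta})$; for the reverse inequality it runs a spreading-out argument, descending the Iitaka map of $D_{\bar\eta}$ to a finite cover $\tilde U\to U$ and then pushing sections from $\tilde F$ down to $F$ via the trace. Your flat base change identity $(\pi_*\Oo_X(\lf mD\rf))\otimes\overline{K(U)}\simeq H^0(X_{\bar\eta},\lf mD_{\bar\eta}\rf)$ shows that $\phi_{\bar\eta}$ is the \emph{complete} linear system map, so both inequalities come at once and the finite-cover step is avoided entirely. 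What your route buys is brevity and conceptual clarity; what the paper's route buys is a slightly more elementary argument that does not invoke cohomology-and-base-change as a black box.

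One small point worth tightening: as written you fix a single sufficiently large and divisible $m$ and conclude $\ka(D_{\bar\eta})=\dim(W_m)_{\bar\eta}$. Strictly, the displayed flat base change identity (together with $\lf mD\rf|_{X_{\bar\eta}}=\lf mD_{\bar\eta}\rf$ after your log resolution) holds for \emph{every} $m$, hence $\dim\phi_{|m'D_{\bar\eta}|}(X_{\bar\eta})=\dim W_{m'}-\dim U$ for all $m'$, and taking the maximum gives the equality. This is implicit in your last paragraph but deserves one extra sentence. Your handling of the $-\infty$ case and of the round-down/restriction compatibility is fine.
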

\begin{proof}
Let $X \to U_{\rm stein} \to U$ be the Stein factorization. By the definition of $\ka(D_{\bar \eta})$, we have $\ka(D_{\bar \eta})= \ka(D_{\bar \eta_s})$. By the definition of $\ka(D|_F)$ and the discussion above, $\ka(D|_F)=\ka(D|_G)$ where $G$ is a general fiber of $X \to U_{\rm stein}$. Hence replacing $U$ by $U_{\rm stein}$, we can assume that $\pi$ has connected fibers. Taking a resolution of $X$ and shrinking $U$, we can assume that $X$ is smooth and $U$ is affine. 

First, by Proposition \ref{prop: definition of kod dim coincides}, suppose that $\ka(D|_F) = \ka(D; X/U) \geq 0$. Consider $\phi=\phi_{U, |mD|}: X \dto W_m/U$ for a sufficiently large and divisible $m$. Taking a resolution, we can assume that $\phi$ is a morphism, and passing to $\bar \eta$, we have $\phi_{\bar\eta}: X_{\bar\eta} \to W_{m,\bar\eta}$ over $\bar\eta$. Moreover, $\phi_{\bar\eta}$ is defined by a sub-linear system of $|mD_{\bar\eta}|$. In particular, $\ka(D_{\bar\eta}) \geq \dim W_{m,\bar\eta} = \dim W_m-\dim U$. On the other hand, by \eqref{eq: compare dim}, $\ka(D|_{F}) \leq \dim W_m - \dim U$, and thus $\ka(D|_F) \leq \ka(D_{\bar\eta})$.

Conversely, suppose $\ka(D_{\bar\eta}) \neq -\infty$. Let $\phi_{|mD_{\bar\eta}|}: X_{\bar\eta} \dto Z_m$ be the dominant rational map defined by ${|mD_{\bar\eta}|}$ such that $\dim Z_m = \ka(D_{\bar\eta})$. Notice that $X_{\bar\eta}, Z_m$ are finite type over $\overline{K(U)}$, and $\phi_{|mD_{\bar\eta}|}$ is defined by finitely many sections. By shrinking $U$, and throwing coefficients of defining equations of $X_{\bar\eta}, Z_m, \phi_{|mD_{\bar\eta}|}$ in $H^0(U, \Oo_U)$, we can assume (see \cite[\S 3 Lemma 2.6]{Liu02}) that there exist finite covers $\theta: \ti U \to U, \tau: \ti X \to X$, a morphism $\ti \pi: \ti X \to \ti U$ and a rational map $\ti \phi: \ti X \dto \ti Z_m$, such that the following diagram commutes:
\[
 \xymatrix{
  & \ti X  \ar@{-->}[dl]_{\ti\phi} \ar[dd]^{\ti\pi} \ar[r]^\tau & X \ar[dd]^\pi\\
\ti Z_m \ar[dr] & &\\
& \ti U \ar[r]^\theta & U.
}
 \] Replacing $\ti U$ by its normalization and $\ti X, \ti Z_m$ by the corresponding base change, we can assume that $\ti U$ is normal. Moreover, $\phi_{|mD_{\bar\eta}|}$ is obtained by the base change of $\ti \phi$ through $\bar\eta \to \ti U$. Let $\ti D =\tau^*D$, then $\ti \phi = \phi_{L}$ where $L \subset |m\ti D|$ is a sub-linear system. Let $\ti F$ be a general fiber of $\ti\pi$ and $F =\tau (\ti F)$ which is a general fiber of $\pi$. Then $L|_{\ti F}$ induces the rational map 
 \[
 \phi_{L|_{\ti F}}: \ti F \dto \ti\phi(\ti F)
 \] such that $\dim \ti\phi(\ti F) = \dim \ti Z_m - \dim \ti U=\dim Z_m=\ka(D_{\bar \eta})$. Let $n$ be the degree of the finite morphism $\tau_{\ti F}: \ti F \to F$. By $${\tau_{\ti F}}_*{\tau_{\ti F}}^*(D|_F) = n (D|_{\ti F}),$$ we have $|\tau_{{\ti F},*}( \lf \tau^*mD\rf|_{\ti F})| \subset |mnD|_F|$. In particular, $|\tau_{{\ti F},*} L|_{\ti F}| \subset |mnD|_F|$. Hence $$\ka(D|_F) \geq \dim \phi_{|mnD|_F|} (F) \geq \dim \ti\phi(\ti F) = \ka(D_{\bar\eta}).$$
 
 Finally, the above argument implies that $\ka(D_{\bar\eta})=-\infty$ iff $\ka(D_F)=-\infty$, and thus completes the proof.
\end{proof}

Combining Proposition \ref{prop: definition of kod dim coincides} with Proposition \ref{prop: kod general and geometric fiber}, we have the following result.

\begin{theorem}\label{thm: compare various definition of relative Iitaka dim}
Let $\pi: X\to U$ be a projective morphism between normal varieties and $D$ be an $\Rr$-Weil divisor on $X$, then 
\[
\ka(D; X/U) = \ka_{\rm rel}(D; X/Y)=\ka(D_{\bar \eta}).
\]
\end{theorem}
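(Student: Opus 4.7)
The plan is to derive the theorem as a direct consequence of the two preceding results, Proposition \ref{prop: definition of kod dim coincides} and Proposition \ref{prop: kod general and geometric fiber}. The first equality $\ka(D;X/U)=\ka_{\rm rel}(D;X/U)$ is already Proposition \ref{prop: definition of kod dim coincides}, so only the second equality $\ka_{\rm rel}(D;X/U)=\ka(D_{\bar\eta})$ requires work, and the main subtlety is the gap between ``general fiber'' (as used in Proposition \ref{prop: kod general and geometric fiber}) and ``very general fiber'' (as used in Definition \ref{def: relative D-dim}).

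First I would reduce to the case where $\pi$ has geometrically integral generic fiber by passing to the Stein factorization, just as in the proof of Proposition \ref{prop: kod general and geometric fiber}. After this reduction, for a general fiber $F$, Proposition \ref{prop: kod general and geometric fiber} gives $\ka(D|_F)=\ka(D_{\bar\eta})$. Hence it suffices to prove that the value $\ka(D|_F)$ for general fibers agrees with $\ka(D|_\Gamma)$ for very general fibers.

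I would split into two cases according to whether $\ka_{\rm rel}(D;X/U)\geq 0$ or $\ka_{\rm rel}(D;X/U)=-\infty$. In the non-negative case, Lemma \ref{le: very general implies general in kod k} applies directly: if $\ka(D|_\Gamma)=k\geq 0$ holds on a very general fiber, then it holds on a general fiber, giving $\ka(D|_F)=\ka_{\rm rel}(D;X/U)$. In the case $\ka_{\rm rel}(D;X/U)=-\infty$, I would argue by upper-semicontinuity of $h^0$: if $h^0(F_t,\Oo_{F_t}(\lf mD|_{F_t}\rf))=0$ for every $m$ on a very general locus, then by upper-semicontinuity this vanishing persists on an open subset of $U$, so $\ka(D|_F)=-\infty$ for a general fiber $F$ as well. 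Finally, the last paragraph of the proof of Proposition \ref{prop: kod general and geometric fiber} already shows $\ka(D_{\bar\eta})=-\infty$ iff $\ka(D|_F)=-\infty$, closing this case.

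The main obstacle I expect is the $-\infty$ case, since Lemma \ref{le: very general implies general in kod k} is explicitly stated only for $k\in\Zz_{\geq 0}$, and the author's remark after it flags precisely this gap. The resolution is that here we do not need to upgrade a ``very general'' statement to a ``general'' statement in full generality; we only need the weaker direction, namely that vanishing of $h^0$ at very general points propagates to general points, which is immediate from upper-semicontinuity. Once this observation is in place, stringing together the two propositions yields the three-way equality.
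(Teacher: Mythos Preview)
Your approach is correct and matches the paper's: the theorem is obtained by combining Proposition~\ref{prop: definition of kod dim coincides} with Proposition~\ref{prop: kod general and geometric fiber}. The paper's proof is literally that one sentence.

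One remark on your handling of the bridge between ``general'' and ``very general'': the case split is unnecessary, and your upper-semicontinuity argument in the $-\infty$ case is slightly imprecise as stated (for each fixed $m$ the vanishing locus is open, but the intersection over all $m$ need not be, so you do not directly get $\ka(D|_F)=-\infty$ on an open set). The cleaner route, implicit in the paper, is uniform in both cases: the very general locus on which $\ka_{\rm rel}$ is computed and the open locus from Proposition~\ref{prop: kod general and geometric fiber} have nonempty intersection (the paper notes earlier that over $\Cc$ any two very general sets meet, and an open dense set is in particular very general). Picking $t$ in this intersection gives $\ka_{\rm rel}(D;X/U)=\ka(D|_{F_t})=\ka(D_{\bar\eta})$ in one stroke, with no separate treatment of $-\infty$.
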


We adopt the following definition of relative Iitaka fibrations (see \cite[Definition 2.1.34]{Laz04I}).

\begin{definition}[Relative Iitaka fibration]\label{def: Iitaka fibration}
Let $\pi: X\to U$ be a projective morphism between varieties. Suppose that $X$ is normal and $D$ is an $\Rr$-Weil divisor on $X$ such that $\kappa(D; X/U) \geq 0$. 
Let $W_m = \phi_{U, |mD|}(X)$. Then any morphism $f: Y \to Z/U$ between smooth varieties satisfying the following property is called a relative Iitaka fibration associated with $D$ over $U$:
\begin{enumerate}
\item there is a birational morphism $h: Y \to X$, and a birational map $g_m: Z \dto W_m$ for each $m$ sufficiently large and divisible, so that the following diagram commutes
\[
 \xymatrix{
 Y \ar[rr]^h \ar[d]_f && X \ar@{-->}[d]^{\phi_{U, |mD|}}\\
 Z \ar@{-->}[rr]^{g_m} \ar[dr] & & W_m \ar[dl]\\
 & U &,
}
 \] 
\item if $D'=h_*^{-1}D+n\Exc(h)$ for $n \gg 1$, then $\dim Z = \kappa(D'; Y/U)+\dim U$, and 
\item $\ka(D'|_{F_{f}})=0$, where $F_{f}$ is a general fiber of $f$.
\end{enumerate}
\end{definition}

\begin{remark}
In \cite[Definition 2.1.34]{Laz04I}, condition (3) is stated for very general fibers instead of general fibers. However, by Lemma \ref{le: very general implies general in kod k}, these two conditions are equivalent because $\ka(D; X/U) \geq 0$.
\end{remark}

\begin{proposition}
Under the notation and assumptions of Definition \ref{def: Iitaka fibration}, an Iitaka fibration exists, and Iitaka fibrations are unique up to birational equivalence. 
\end{proposition}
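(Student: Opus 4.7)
The plan for existence is as follows. First, fix $m_0$ sufficiently large and divisible so that $W_{m_0} = \phi_{U, |m_0D|}(X)$ realizes the maximal dimension $\ka(D; X/U) + \dim U$ and general fibers of $\phi_{U, |m_0D|}$ are connected. Take a log resolution $h: Y \to X$ that resolves the indeterminacy of $\phi_{U, |m_0D|}$, and then a resolution $Z \to W_{m_0}$ large enough that the induced rational map $f: Y \dto Z$ becomes a morphism with $Y, Z$ smooth. Set $D' = h_*^{-1}D + n\Exc(h)$ with $n \gg 1$ as in the third paragraph of Section \ref{sec: relative Kodaira dim}, so that $h_*\Oo_Y(\lf kD' \rf) = \Oo_X(\lf kD \rf)$ for every $k \in \Zz_{\geq 0}$; this already forces $\ka(D'; Y/U) = \ka(D; X/U)$.

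Next I would verify the three conditions of Definition \ref{def: Iitaka fibration}. For (1), take $g_{m_0}: Z \to W_{m_0}$ to be the chosen resolution, and for every other sufficiently large and divisible $m$ define $g_m = \tau_{m_0, m}^{-1} \circ g_{m_0}$ using the birational map $\tau_{m_0, m}: W_{m_0} \dto W_m$ supplied by Lemma \ref{le: compare images of iitaka fibration}; commutativity of the diagram is then immediate. Condition (2) reduces to the chain $\dim Z = \dim W_{m_0} = \ka(D; X/U) + \dim U = \ka(D'; Y/U) + \dim U$. For (3), since $g_{m_0}$ is birational a general fiber of $f$ is birational to a general fiber of $\phi_{U,|m_0 D|} \circ h$, and Lemma \ref{le: general fiber has Kod=0} says that such a fiber has $D'$-Kodaira dimension zero.

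For uniqueness, suppose $f_i: Y_i \to Z_i/U$, $i = 1, 2$, are two relative Iitaka fibrations with birational morphisms $h_i: Y_i \to X$ and birational maps $g_m^{(i)}: Z_i \dto W_m$ as in condition (1). The composition $\psi \coloneqq (g_m^{(2)})^{-1} \circ g_m^{(1)}: Z_1 \dto Z_2$ is a birational map over $U$, and by Lemma \ref{le: compare images of iitaka fibration} it does not depend on the choice of sufficiently large and divisible $m$, because the transitions $\tau_{m,k}$ between the $W_m$'s cancel on both sides. Passing to a common resolution $\widetilde Y$ dominating both $Y_1$ and $Y_2$, the diagram in condition (1) identifies each $f_i$ with $(g_m^{(i)})^{-1} \circ \phi_{U, |mD|}$ as rational maps out of $\widetilde Y$, and composing gives $\psi \circ f_1 = f_2$.

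The main difficulty is only the birational bookkeeping, namely that resolutions on both the source and the target can be chosen coherently across different $m$; all the geometric content has already been established in Lemmas \ref{le: compare images of iitaka fibration} and \ref{le: general fiber has Kod=0}, and this argument is simply the relative counterpart of the classical treatment in \cite[Chapter II, \S 3.c]{Nak04}.
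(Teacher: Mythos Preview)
Your approach is essentially the one in the paper: fix a sufficiently divisible $m_0$, invoke Lemma~\ref{le: general fiber has Kod=0} for condition~(3), Lemma~\ref{le: compare images of iitaka fibration} for condition~(1), and the equality $h_*\Oo_Y(\lf kD'\rf)=\Oo_X(\lf kD\rf)$ for condition~(2). The uniqueness argument you spell out is exactly what the paper means by ``unique by definition''.

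There are, however, two slips in the execution. First, your construction order for $Y$ and $Z$ is backwards: once $h:Y\to X$ is fixed and $Y\to W_{m_0}$ is a morphism, taking a \emph{higher} resolution $Z\to W_{m_0}$ only creates more indeterminacy for $Y\dto Z$, not less. The paper avoids this by first fixing a resolution $Z\to W_{m_0}$ and \emph{then} taking $Y$ as a resolution of the main component of $Z\times_{W_{m_0}}X$, which guarantees that $f:Y\to Z$ is a genuine morphism; you should reverse your order accordingly. Second, in your formula for $g_m$ the inverse is on the wrong side: Lemma~\ref{le: compare images of iitaka fibration} gives $\tau_{m_0,m}\circ\phi_{U,|m_0D|}=\phi_{U,|mD|}$, so the correct definition is $g_m=\tau_{m_0,m}\circ g_{m_0}$, not $\tau_{m_0,m}^{-1}\circ g_{m_0}$. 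Both are bookkeeping errors rather than conceptual ones, and after these corrections your argument coincides with the paper's.
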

\begin{proof}
Take $m \in\Zz_{\geq 0}$ sufficiently large and divisible and let $W_m = \phi_{U, |mD|}(X)$. Taking a resolution, we can assume that $\phi_{U, |mD|}(X)$ is a morphism. By Lemma \ref{le: general fiber has Kod=0}, $\ka(D|_F)=0$ for any general fiber $F$ of $\phi_{U, |mD|}(X)$. By Lemma \ref{le: compare images of iitaka fibration}, for a sufficiently large and divisible $k$, there is a birational morphism $\tau_{m,k}: W_m \dto W_{k}/U$ such that $\tau_{m,k}\circ\phi_{U, |mD|}=\phi_{U, |kD|}$. Take a resolution of $Z \dto W_m/U$ and a resolution $Y$ of the main component of $Z \times_{W_m} X$. Let $h: Y \to X/U$ and $f: Y \to Z/U$ be the corresponding morphisms. By $\ka(D|_F)=0$ again, we have $\ka(D'|_{F_f})=0$ where $D'=h_*^{-1}D+n\Exc(h)$ for $n \gg 1$ and $F_f$ is a general fiber of $f$ (for example, see \cite[Chapter II, Lemma 3.11]{Nak04}). By $h_*\Oo_Y(\lf k D' \rf) = \Oo_X(\lf kD \rf)$ for any $k\in \Zz_{\geq 0}$, $$\ka(D';Y/U) = \ka(D; X/U) = \dim W_m -\dim U = \dim Z -\dim U.$$ Thus, we obtain a relative Iitaka fibration by Definition \ref{def: Iitaka fibration}. Such relative Iitaka fibrations are unique up to birational equivalence by definition. 
\end{proof}

\subsection{Iitaka fibrations versus ample models}\label{subsec: Iitaka fibrations versus ample models}

We study ample models between varieties. The following lemma should be well-known.

\begin{lemma}\label{le: compare R-linear systems}
Let $g: X \to Z/U$ be a projective morphism with connected fibers between varieties over $U$. Assume that $Z$ is a $\Qq$-factorial variety and $E \geq 0$ is a divisor on $X$ such that $g_*\Oo_X(\lf k E \rf)=\Oo_Z$ for any $k \in \Zz_{\geq 0}$. Then for any $\Rr$-divisor $D$ on $Z$, there is a natural identification between following $\Rr$-linear systems
\[
|D/U|_{\Rr} \xrightarrow{\sim} |g^*D+E/U|_{\Rr}.
\]
\end{lemma}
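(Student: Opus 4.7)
The plan is to prove the bijection by handling injectivity, which is immediate, and then concentrating on surjectivity. For injectivity, if $g^*B_1+E=g^*B_2+E$, then $g^*(B_1-B_2)=0$, and since $g$ is surjective while $Z$ is $\Qq$-factorial, $B_1-B_2$ is $\Rr$-Cartier and its nonzero pullbacks are nonzero, forcing $B_1=B_2$. For surjectivity, fix $C\in|g^*D+E/U|_{\Rr}$ and write
\[
C-g^*D-E\;=\;\sum_{i=1}^{s}r_i\,{\rm div}(\phi_i)+\pi^*M,
\]
with $r_i\in\Rr$, $\phi_i\in K(X)^\times$, and $M$ an $\Rr$-Cartier divisor on $U$ (here $\pi=\pi_Z\circ g$, with $\pi_Z\colon Z\to U$). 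I aim to produce $B\geq 0$ on $Z$ with $B\sim_{U,\Rr}D$ and $g^*B+E=C$.

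The first main step is to establish $C^h=E^h$ (horizontal parts over $Z$). Restricting the displayed identity to a general fiber $F$ of $g$ kills $g^*D|_F$ and $\pi^*M|_F$, leaving
\[
C^h|_F-E^h|_F\;=\;\sum_i r_i\,{\rm div}(\phi_i|_F),
\]
with both $C^h|_F$ and $E^h|_F$ effective on $F$. By base change of the hypothesis $g_*\Oo_X(\lf kE\rf)=\Oo_Z$, one has $H^0(F,\Oo_F(\lf kE\rf|_F))=\Cc$ for every $k\geq 0$, so $E^h|_F$ is rigid in each integer class $\lf kE\rf|_F$. Using that an $\Rr$-linear equivalence between $\Qq$-divisors is already a $\Qq$-linear equivalence (and a rational-approximation/convexity argument inside the convex set of effective representatives for the genuine real-coefficient case), this should force $E^h|_F$ to be the unique effective $\Rr$-divisor in its $\Rr$-linear class on $F$. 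Hence $C^h|_F=E^h|_F$, and varying $F$ yields $C^h=E^h$ on $X$.

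With $C^h=E^h$, the tuple $(r_i)$ lies in the kernel $K_F$ of the $\Qq$-linear map $(r_i)\mapsto\sum_i r_i\,{\rm div}(\phi_i|_F)$, a $\Qq$-subspace of $\Rr^s$. Picking a $\Qq$-basis $\{(n_{k,i})\}$ of $K_F$ and clearing denominators, each product $\prod_i\phi_i^{Nn_{k,i}}$ is a rational function on $X$ whose restriction to a general fiber of $g$ is constant, so by $g_*\Oo_X=\Oo_Z$ (connected fibers) it descends to $g^*\psi_k$ for some $\psi_k\in K(Z)^\times$. Taking $\Rr$-linear combinations gives $\sum_i r_i\,{\rm div}(\phi_i)=g^*T$ for some $\Rr$-principal $T$ on $Z$. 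Setting $B:=D+T+\pi_Z^*M$ yields $g^*B+E=C$ and $B\sim_{U,\Rr}D$. To check $B\geq 0$, for each prime divisor $P\subset Z$ with $g^*P=\sum_i m_{P,i}P_i$, the hypothesis $g_*\Oo_X(\lf kE\rf)=\Oo_Z$ forces some component $P_{i_0}$ to satisfy $\mult_{P_{i_0}}(E)=0$; otherwise $g^*(t_P^{-1})\in\Oo_X(\lf kE\rf)$ near $g^{-1}(P)$ for $k$ large would give $t_P^{-1}\in g_*\Oo_X(\lf kE\rf)=\Oo_Z$ near $P$, a contradiction. Then $\mult_P(B)\cdot m_{P,i_0}=\mult_{P_{i_0}}(C-E)=\mult_{P_{i_0}}(C)\geq 0$, so $\mult_P(B)\geq 0$. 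The main obstacle in this plan is the horizontal-agreement step: upgrading the integer-level rigidity to $\Rr$-level uniqueness of $E^h|_F$ in its $\Rr$-linear class, where the $\Zz$ and $\Qq$ cases are immediate by scaling but the genuinely irrational case requires the delicate approximation argument.
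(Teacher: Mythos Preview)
Your proposal is correct. For the horizontal step you and the paper proceed identically: restrict to a very general fiber $F$, use $H^0(F,\Oo_F(\lfloor kE\rfloor|_F))=\Cc$, and run a rational approximation to reach a contradiction. The paper makes your ``delicate approximation'' explicit: if $\sum_i r_i\,\mathrm{div}(s_i|_F)\neq 0$, observe that for every prime $P\subset F$ with $P\not\subset\Supp(E|_F)\cup\Supp(C|_F)$ one has the rational equation $\sum_i r_i\,\mult_P(\mathrm{div}(s_i|_F))=0$; perturbing $(r_i)$ to a nearby rational $(r_i')$ inside the solution space of these finitely many equations keeps the divisor nonzero while preserving $2E|_F+\sum_i r_i'\,\mathrm{div}(s_i|_F)\geq 0$, and then a suitable integer multiple produces a nonconstant section of some $\Oo_F(\lfloor kE\rfloor|_F)$.

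Where you genuinely diverge from the paper is in the vertical step. After obtaining $C^h=E^h$, the paper strips off $E^h$, observes $\codim g(E^v)\geq 2$, and then builds $B$ from thresholds $t_P(C)=\max\{t\geq 0: C-tg^*P\geq 0\text{ over }\eta_P\}$, finishing with the somewhat terse claim that $C-g^*B\sim_{Z,\Rr}E$ together with the codimension bound forces $C-g^*B=E$. Your route is more explicit: you descend the $\Rr$-principal part $\sum_i r_i\,\mathrm{div}(\phi_i)$ to $Z$ directly, by writing $(r_i)$ in a $\Qq$-basis of the kernel and noting that each basis vector yields a rational function with vertical divisor, hence constant on fibers and pulled back from $K(Z)$ via $g_*\Oo_X=\Oo_Z$. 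This gives $B$ by a formula and reduces $B\geq 0$ to the clean observation that over every prime $P\subset Z$ some component of $g^*P$ carries no $E$. One small point to tighten: define your kernel as $K=\{(a_i):\bigl(\sum_i a_i\,\mathrm{div}(\phi_i)\bigr)^h=0\}$ rather than $K_F$ for a single fiber, so that each $\Qq$-basis vector automatically gives a function with vertical divisor on all of $X$; for general $F$ the two kernels coincide, but stating it globally avoids that verification.
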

\begin{proof}
There is a natural map $B \mapsto g^*B+E$ for any $B \in |D/U|_{\Rr}$.  This map is clearly injective. For surjectivity, take $C \in |g^*D+E/U|_{\Rr}$. Then by definition
\begin{equation}\label{eq: expression of C}
C= g^*D+E+\sum_i r_i ~{\rm div}(s_i)+g^*\phi^*\Theta,
\end{equation} where $r_i \in \Rr \backslash\{0\}$, $s_i \in K(X)\backslash\{0\}$ are rational sections, $\Theta$ is an $\Rr$-Cartier divisor on $U$, and $\phi: Z \to U$ is the morphism. Let $F$ be a very general fiber of $g: X \to Z$. By $g_*\Oo_X(\lf k E \rf)=\Oo_Z$ for any $k \in \Zz_{\geq 0}$, we know that $\ka(E; X/Z)=\ka(E|_{F})=0$ by Theorem \ref{thm: compare various definition of relative Iitaka dim}. Hence $H^0(F, \Oo_F(\lf kE \rf|_{F}))=\Cc$ for each $k \in \Zz_{\geq 0}$. By \eqref{eq: expression of C}, we have
\[
0\leq C|_F = E|_F + \sum_i r_i ~{\rm div}(s_i|_F).
\] If $\sum_i r_i ~{\rm div}(s_i|_F) \neq 0$, then we claim that there exists $r_i' \in \Qq$ close enough to $r_i$ for each $i$ such that $\sum_i r'_i ~{\rm div}(s_i|_F)) \neq 0$ either. Moreover, we can assume that $2E|_F + \sum_i r'_i ~{\rm div}(s_i|_F)) \geq 0$. In fact, suppose that $P$ is a prime divisor such that $P \subset \Supp(\di(s_i|_F))$ for some $i$ and $P \not\subset \Supp E|_F$, $P \not\subset \Supp C|_F$. Let $n_{P, i} = \mult_P \di(s_i|_F)$, we have $\sum_i r_i n_{P,i}=0$. There are finitely many such divisors $P$, and hence finitely many such rational equations. Therefore, as long as $\{r_i' \mid i\}$ is a solution of these equations and $|r_i'-r_i| \ll 1$, they satisfy the claimed property.

Let $m \in \Zz_{>0}$ such that $mr_i' \in \Zz$ for each $i$, and choose $k \in \Zz_{>0}$ sufficiently large, we have
\[
\lf k E|_F \rf + \sum mr_i' ~{\rm div}(s_i|_F) \geq 0. 
\] Thus $H^0(F, \Oo_F(\lf kE \rf|_{F}))\supsetneq \Cc$ which is a contradiction. 

Hence $\sum_i r_i ~{\rm div}(s_i|_F) = 0$, and thus $\sum_i r_i{\rm div}(s_i)$ can only be vertical over $Z$. Let $E^h$ be the horizontal part of $E$ over $Z$, we have $C \geq E^h$. Because $E^v=E-E^h$ satisfies the same property of $E$ in the proposition, replacing $E$ by $E^v$ and $C$ by $C-E^h$, we can assume that $E$ is vertical over $Z$. If $\codim g(E)=1$, then $g_*\Oo_X(\lf kE \rf) \supsetneq \Oo_Z$ for $k \gg 1$. Hence $\codim g(E) \geq 2$. 

For any prime divisor $P$ on $Z$, define 
\begin{equation}\label{eq: def of t}
t_P(C) \coloneqq \max\{t \in \Rr_{\geq 0} \mid C-tg^*P \geq 0 \text{~over the generic point of~} P\}.
\end{equation} Let $B = \sum_P t_P(C)P$ on $Z$. Then $C - g^*B =\Gamma$ with $\codim g(\Gamma) \geq 2$. By $\Gamma = C - g^*B \sim_{Z, \Rr} E$ and $\codim g(E) \geq 2$, we have $\Gamma = E$. Hence $C = g^*B +E$.
\end{proof}

Lemma \ref{le: compare R-linear systems} implies that the ample model does not change under resolutions. Precisely, we have the following lemma.

\begin{lemma}\label{le: compare ample models}
Let $g: X \to Z/U$ be a projective morphism with connected fibers between varieties over $U$. Assume that $Z$ is a $\Qq$-factorial variety and $E \geq 0$ is an $\Rr$-Cartier divisor on $X$ such that $g_*\Oo_X(\lf k E \rf)=\Oo_Z$ for any $k \in \Zz_{\geq 0}$. Let $D$ be an $\Rr$-Cartier divisor on $Z$. 
\begin{enumerate}
\item If $h: Z \dto V$ is the ample model$/U$ for $D$, then $h \circ g: X \dto V$ is the ample model$/U$ for $g^*D+E$. 
\item Suppose that $\ka(D; Z/U) \geq 0$. If $\tau: X \dto V$ is the ample model$/U$ for $g^*D+E$, then there is a rational map $h: Z \dto V/U$ such that $h$ is the ample model$/U$ for $D$ and $h \circ g = \tau$.
\item If $g$ is a birational morphism, then the $\Qq$-factorial assumption on $Z$ is not necessary: $Z \dto V$ is the ample model$/U$ for $D$ iff $X \dto V$ is an ample model$/U$ for $g^*D+E$.
\end{enumerate}
\end{lemma}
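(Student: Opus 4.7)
The strategy is to apply Lemma~\ref{le: compare R-linear systems} on a common resolution to transfer the ample model condition between $X$ and $Z$. For (1), let $p_0\colon W_0 \to Z$ and $q_0\colon W_0 \to V$ resolve $h$ with $p_0^{*}D \sim_{U,\Rr} q_0^{*}H + F_0$, $F_0 \geq 0$ the fixed part. Take a common resolution $W$ dominating $X$ and $W_0$, chosen as a log resolution of $(X, E)$, and set $p_X\colon W \to X$, $r\colon W \to W_0$, $\pi := g \circ p_X = p_0 \circ r$, and $q := q_0 \circ r$. This choice ensures $p_{X*}\Oo_W(\lf k p_X^{*}E \rf) = \Oo_X(\lf kE \rf)$, hence $\pi_{*}\Oo_W(\lf k p_X^{*}E \rf) = g_*\Oo_X(\lf kE \rf) = \Oo_Z$ for every $k \geq 0$. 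Lemma~\ref{le: compare R-linear systems} applied to $\pi$ with the divisor $p_X^{*}E$ then gives a bijection $|D/U|_\Rr \to |p_X^{*}(g^{*}D+E)/U|_\Rr$, $B' \mapsto \pi^{*}B' + p_X^{*}E$. Setting $F := r^{*}F_0 + p_X^{*}E \geq 0$, the identity $p_X^{*}(g^{*}D+E) = \pi^{*}D + p_X^{*}E \sim_{U,\Rr} q^{*}H + F$ is the required ample decomposition; for any $B \in |p_X^{*}(g^{*}D+E)/U|_\Rr$, the bijection writes $B = r^{*}p_0^{*}B' + p_X^{*}E$ with $B' \in |D/U|_\Rr$, and the ample model property of $h$ on $W_0$ yields $p_0^{*}B' \geq F_0$, whence $B \geq F$. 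Since $q$ is a composition of contractions, $h \circ g$ is the ample model for $g^{*}D + E$.

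Part (3) is the specialization when $g$ is birational: the hypothesis $g_*\Oo_X(\lf kE \rf) = \Oo_Z$ forces the non-exceptional part of $E$ to vanish, so $E$ is $g$-exceptional and the $\Qq$-factoriality of $Z$ in Lemma~\ref{le: compare R-linear systems} is not needed. The argument of (1) then works in both directions.

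For (2), resolve $\tau$ by $\alpha\colon Y \to X$ and $\beta\colon Y \to V$ with $\alpha^{*}(g^{*}D+E) \sim_{U,\Rr} \beta^{*}H + F$. First I show $\tau$ factors through $g$: for a very general fiber $F_g$ of the Stein factorization of $g$ and $\tilde F_g := \alpha^{-1}(F_g)$, the restriction $\alpha^{*}E|_{\tilde F_g} \sim_{\Rr} \beta^{*}H|_{\tilde F_g} + F|_{\tilde F_g}$ has Kodaira dimension $0$ on the left, by Theorem~\ref{thm: compare various definition of relative Iitaka dim} applied to $g_*\Oo_X(\lf kE \rf) = \Oo_Z$ which yields $\ka(E|_{F_g}) = 0$; while on the right $\ka(\beta^{*}H|_{\tilde F_g}) \geq \dim \beta(\tilde F_g)$ since $H$ is ample, forcing $\dim \beta(\tilde F_g) = 0$. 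Hence $\beta$ contracts $\tilde F_g$, and Lemma~\ref{le: contract to point} produces $\bar h\colon Z \dto V$ with $\tau = \bar h \circ g$. To check $\bar h$ is the ample model for $D$, take a common resolution $Y''$ dominating $Y$ and a resolution $W_Z$ of $\bar h$; Lemma~\ref{le: compare R-linear systems} applied to $\pi''\colon Y'' \to Z$ with $\alpha''^{*}E$ (as in (1)) identifies the fixed part of $|\alpha''^{*}(g^{*}D+E)/U|_\Rr$ as $F_{\pi D} + \alpha''^{*}E$ with $F_{\pi D} \geq 0$ the fixed part of $|\pi''^{*}D/U|_\Rr$; pushforward by the morphism $\mu\colon Y'' \to W_Z$ then yields the ample decomposition $\bar p^{*}D \sim_{U,\Rr} \bar q^{*}H + \mu_*F_{\pi D}$ on $W_Z$, and one checks $\mu_*F_{\pi D}$ is the fixed part of $|\bar p^{*}D/U|_\Rr$ by pulling back via $\mu$. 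The hypothesis $\ka(D; Z/U) \geq 0$ guarantees $|D/U|_\Rr \neq \emptyset$ so the bijection is meaningful.

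The main technical point I expect to be the obstacle is ensuring $p_{X*}\Oo_W(\lf k p_X^{*}E \rf) = \Oo_X(\lf kE \rf)$ for $\Rr$-Cartier $E$; this is achieved by choosing $W$ to be a log resolution of $(X, E)$ so that $p_X^{*}E$ splits cleanly as the strict transform of $E$ plus non-negative exceptional components, and $\lf k p_X^{*}E \rf$ decomposes accordingly. Once this identity is in place, the bijection of Lemma~\ref{le: compare R-linear systems} carries through and the rest of the argument is formal.
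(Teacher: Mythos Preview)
Your argument for (1) is essentially the paper's, repackaged on a single common resolution; the paper instead proves (3) first and uses it to assume $\tau$ is already a morphism before invoking Lemma~\ref{le: compare R-linear systems} on $g$ directly. The technical point you flag is a non-issue: for any rational function $s$ with $\mathrm{div}(s)+\lfloor kp_X^*E\rfloor\geq 0$ one has $\mathrm{div}(s)+kE\geq 0$ on $X$ by pushforward, and integrality of principal divisors then gives $\mathrm{div}(s)+\lfloor kE\rfloor\geq 0$; no special choice of resolution is needed.

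For the factorization step in (2) your route is genuinely different and more direct. The paper appeals to the forward-referenced Proposition~\ref{prop: ample model bir to Iitaka} (an ample model is an Iitaka fibration), verifies $g_*\Oo_X(\lfloor k(g^*D+E)\rfloor)=\Oo_Z(\lfloor kD\rfloor)$ so that $X\to Z$ composed with an Iitaka fibration of $D$ is also one for $g^*D+E$, and then uses uniqueness of Iitaka fibrations to produce a birational map $W_m\dto V$ and hence $h\colon Z\dto V$. Your fibre-wise Kodaira-dimension argument via Proposition~\ref{prop: Choi} and Lemma~\ref{le: contract to point} avoids this detour entirely; it is self-contained and arguably cleaner.

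There are, however, two places where your sketch is too quick. In (3) you assert that the argument of (1) runs in both directions without $\Qq$-factoriality once $E$ is exceptional, but Lemma~\ref{le: compare R-linear systems} uses $\Qq$-factoriality of $Z$ to pull back the auxiliary divisor $B=\sum_P t_P(C)P$, and that step does not survive. The paper bypasses the lemma altogether here: for $B'\in|p^*(g^*D+E)/U|_\Rr$ it observes that $B'-p^*g^*(g_*p_*B')$ and $p^*E$ are both $(g\circ p)$-exceptional and $\sim_{U,\Rr}$-equivalent, hence equal. You need to supply this (or an equivalent) argument. In (2), your verification that $\bar h$ is the ample model glosses over two steps the paper treats explicitly: one must first show that the fixed divisor $F$ on the resolution actually dominates $\alpha''^*E$ before writing $F=F_{\pi D}+\alpha''^*E$ (the paper does this by choosing a general $A'\sim_{\Rr} A$ whose pullback avoids the components of $E$), and one must check that the resolved morphism $W_Z\to V$ is a contraction, as Definition~\ref{def: ample and canonical model} demands.
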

\begin{proof}
We show (3) first. By $g_*\Oo_X(\lf k E \rf)=\Oo_Z$ for any $k \in \Zz_{\geq 0}$, we have $\codim g(E) \geq 2$. Suppose that $Z \dto V$ is the ample model for $D$.  Let $X \xleftarrow{p} W \xrightarrow{q} V$ be a resolution of $X \dto V$. Let $H$ be the ample$/U$ divisor on $V$ such that $p^*g^*D \sim_{U, \Rr} q^*H+F$ with $F \geq 0$ such that for any $B \in |p^*g^*D/U|_\Rr$, $B \geq F$. If $B' \in  |p^*g^*D+p^*E/U|_\Rr$, then $g_*p_*B' \in |D/U|_\Rr$ and thus $p^*g^*(g_*p_*B') \in |p^*g^*D/U|_\Rr$. In particular, $p^*g^*(g_*p_*B') \geq F$. On the other hand, by $B' - p^*g^*(g_*p_*B') \sim_{U, \Rr} p^*E$ and $B' - p^*g^*(g_*p_*B')$ is $(g \circ p)$-exceptional, we have $B' - p^*g^*(g_*p_*B') = p^*E$. Thus $B' \geq F+p^*E$. Notice that $p^*g^*D+p^*E \sim_{U, \Rr} q^*H+F+p^*E$, hence $X \dto V$ is the ample model for $g^*D+E$.

Conversely, suppose that $X \dto V$ is the ample model for $g^*D+E$. We show that it is also the ample model for $g^*D$. Indeed, if $p: W \to X$ is a resolution, then $|p^*g^*D+p^*E/U|_\Rr = |p^*g^*D/U|_\Rr+p^*E$ by the same argument as above. Next, by $g^*|D/U|_\Rr = |g^*D/U|_\Rr$, $Z \dto V$ is the ample model for $D$.

For (1), let $Z \xleftarrow{p} W \xrightarrow{q} V$ be a resolution of $h$, and  let $X \xleftarrow{\theta} X' \rightarrow V$ be a resolution of $h \circ g$ such that $X' \to W$ is a morphism. By (3), $X \dto V'$ is the ample model for $g^*D+E$ iff $X' \to X \dto V'$ is the ample model for $\theta^*(g^*D+E)$. Replacing $X$ by $X'$, $E$ by $\theta^*E$ and $g^*D+E$ by $\theta^*g^*D+\theta^*E$, we can assume that $\tau: X \to V$ is a morphism. Let $\mu: X \to W$ be the morphism such that $\tau = q \circ \mu$. Because $Z \dto V$ is the ample model for $D$, there exists an ample divisor $A$ on $V$ and $F \geq 0$ on $W$ such that $q^*A+F \in |p^*D/U|_\Rr$. Moreover, if $B' \in |p^*D/U|_\Rr$ then $B' \geq F$. Thus $\tau^* A+(\mu^*F+E) \in |g^*D+E/U|_\Rr$. If $B \in  |g^*D+E/U|_\Rr$, then by Lemma \ref{le: compare R-linear systems}, there exists $D' \in  |D/U|_\Rr$ such that $B =g^*D'+E$. Hence $B \geq \mu^*F+E$. $\tau$ is a contraction morphism because $\mu$ and $q$ are contraction morphisms. This shows that $\tau: X \to V$ is the ample model for $g^*D+E$ over $U$ by definition.

For (2), let $\phi: Z \dto W_m/U$ be an Iitaka fibration of $D$ for a sufficiently large and divisible $m$. Then by $g_*\Oo_X(\lf k E \rf)=\Oo_Z$ for any $k \in \Zz_{\geq 0}$, we claim that $X \to Z \dto W_m/U$ is birational to an Iitaka fibration of $g^*D+E$. It is enough to show $g_*\Oo_X(\lf k(g^*D+E) \rf) = \Oo_Z(\lf kD \rf)$ for any $k \in \Zz_{\geq 0}$. The ``$\supset$'' is by definition. We show the converse inclusion. Notice that over $V= Z \backslash \Supp D$, $g_*\Oo_X(\lf k(g^*D+E) \rf)|_V = \Oo_Z(\lf kD \rf)|_V= \Oo_V$, hence for any open set $U \subset Z$, we can identify a section $\alpha$ in $g_*\Oo_X(\lf k(g^*D+E) \rf)|_U$ with a rational section $ \beta \in K(Z)$ through $\alpha=\beta \circ g$. Suppose that there exists $\alpha \in K(X)$ such that $\di(\alpha)+k(g^*D+E) \geq 0$ over an open set $U \subset Z$. By Lemma \ref{le: compare R-linear systems}, there exists $\beta \in K(Z)$ such that $\alpha=\beta \circ g$ and $\di(\beta)+kD \geq 0$. In fact, 
\[
g^*(\di(\beta)/k+D)+E =\di(\alpha)/k+(g^*D+E) \in |g^*D+E/U|_\Rr,
\] and thus $\di(\beta)/k+D \in |D/U|_\Rr$. In conclusion, $g_*\Oo_X(\lf k(g^*D+E) \rf) \subset \Oo_Z(\lf kD \rf)$, and this shows the claim.

By Proposition \ref{prop: ample model bir to Iitaka} below, there is a birational map $W_m \dto V/U$. This induces a map $h: Z \dto V/U$ such that $h \circ g = \tau$. Let $\psi: Z \to U$ be the morphism. As before, $Z \xleftarrow{p} W \xrightarrow{q} V$ is a resolution of $h$, and by taking a resolution, we can assume that there are morphisms $\mu: X \to W$ and $\tau = q \circ \mu$. Let $A >0$ be the ample divisor on $V$ by the definition of the ample model for $g^*D+E$. Then $g^*D+E = \mu^*p^*D+E$ and $\mu^*q^*A+F \in |\mu^*p^*D+E/U|_\Rr$ for $F \geq 0$ such that for any $B \in  |\mu^*p^*D+E/U|_\Rr$, $B \geq F$. We claim $F \geq E$. In fact, by Lemma \ref{le: compare R-linear systems}, $$|p^*D/U|_\Rr \xrightarrow{\sim} |\mu^*p^*D/U|_\Rr\xrightarrow{\sim} |\mu^*p^*D+E/U|_\Rr.$$ Thus for any $\Gamma \in |\mu^*p^*D+E/U|_\Rr$, $\Gamma \geq E$. If $F \not\geq E$, then by $A$ ample, we can find $0<A' \sim_\Rr A$ such that $\Supp\mu^*p^*A'$ does not contain any component of $E$. Thus $\mu^*p^*A'+F \not\geq E$, a contradiction. By Lemma \ref{le: compare R-linear systems}, there exists $B \in |p^*D/U|_{\Rr}$ such that $\mu^*q^*A+F = \mu^*B+E$. Hence $\Theta \coloneqq B-q^*A$ satisfies that $\mu^*\Theta= F-E \geq 0$, and thus $\Theta \geq 0$. Notice that $q: W \to V$ is a contraction. Finally, for any $H \in |p^*D/U|_\Rr$, $\mu^*H+E \in |\mu^*p^*D+E/U|_\Rr$, and thus $\mu^*H+E \geq F$ because $\tau: X \dto V$ is the ample model for $\mu^*p^*D+E$ over $U$. Thus $\mu^*H \geq F-E=\mu^*\Theta$ and hence $H \geq \Theta$. Notice that $\mu^*q^*A+F-E \in |\mu^*p^*D/U|_\Rr$, hence $q^*A+\Theta \in |p^*D/U|_\Rr$. This shows that $h: Z \dto V/U$ is the ample model for $D$.
\end{proof}

\begin{proposition}\label{prop: ample model bir to Iitaka} 
Let $\pi: X\to U$ be a projective morphism between varieties. Assume that $X$ is normal and $D$ is an $\Rr$-Cartier divisor on $X$ such that $\ka(D; X/U) \geq 0$. If $f: X \dto Z/U$ is the ample model for $D$, then $f$ is birational to an Iitaka fibration associated with $D$.
\end{proposition}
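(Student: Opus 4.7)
Take a resolution $p : W \to X$ and $q : W \to Z$ of $f$ with $q$ a contraction morphism. By the definition of the ample model, we may write $p^*D \sim_{U,\Rr} q^*H + E$ where $H$ is an ample divisor over $U$ on $Z$, $E \geq 0$, and every $B \in |p^*D/U|_{\Rr}$ satisfies $B \geq E$. Setting $D' := p_*^{-1} D + n\,\Exc(p)$ for $n \gg 1$, the plan is to verify the three conditions of Definition \ref{def: Iitaka fibration} for $q : W \to Z$. The crucial inputs are to establish
\[
\dim Z - \dim U \,=\, \ka(D; X/U) \qquad \text{and} \qquad \ka(E|_F)\,=\,0 \text{ for general fibers } F \text{ of } q.
\]

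\medskip
\noindent\textbf{Kodaira dimension equality.} The lower bound $\ka(D; X/U) \geq \dim Z - \dim U$ is immediate: since $H$ is ample over $U$, $\ka(q^*H; W/U) = \dim Z - \dim U$, and Proposition \ref{prop: Choi} applied to $p^*D \sim_{U,\Rr} q^*H + E$ with $E \geq 0$ yields $\ka(D; X/U) = \ka(q^*H + E; W/U) \geq \ka(q^*H; W/U)$. For the reverse bound, I apply easy addition \cite[II Theorem 3.13]{Nak04} to $q$, which gives $\ka(p^*D; W/U) \leq \ka(p^*D|_F) + (\dim Z - \dim U)$ for general $F$. Since $q^*H|_F = 0$, $p^*D|_F \sim_\Rr E|_F \geq 0$, so $\ka(p^*D|_F) = \ka(E|_F) \geq 0$, and it remains to show $\ka(E|_F) \leq 0$.

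\medskip
\noindent\textbf{Fiber vanishing.} Suppose for contradiction $\ka(E|_F) \geq 1$ for general $F$. By Lemma \ref{le: fiber kad positive} applied to $q$, $\ka(E; W/Z) \geq 1$, so for some $m \in \Zz_{>0}$ with $mE$ integer, $q_*\Oo_W(mE)$ has generic rank $\geq 2$. Thus there exists $s \in K(W)$ with $\mathrm{div}(s) + mE \geq 0$ on $q^{-1}(V)$ for some open $V \subset Z$ and with $s|_F$ non-constant on general fibers. Choose $N \geq 0$ on $Z$ supported on $Z \setminus V$ such that $\mathrm{div}(s) + mE + q^*N \geq 0$ on $W$, and for small $\epsilon > 0$ set
\[
E_\epsilon \,:=\, (1-\epsilon)E + \tfrac{\epsilon}{m}\bigl(mE + \mathrm{div}(s) + q^*N\bigr)\,=\, E + \tfrac{\epsilon}{m}\bigl(\mathrm{div}(s) + q^*N\bigr)\,\geq\, 0,
\]
so $E_\epsilon \sim_{U,\Qq} E + \tfrac{\epsilon}{m}\, q^*N$. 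Since $H$ is ample over $U$, for $\epsilon$ small $H - \tfrac{\epsilon}{m} N$ remains ample over $U$, hence $\sim_{U,\Rr} H_0$ for some effective divisor $H_0 \geq 0$ on $Z$. Then $B := E_\epsilon + q^*H_0 \geq 0$ and $B \sim_{U,\Rr} p^*D$, so $B \in |p^*D/U|_{\Rr}$; by minimality, $B \geq E$. Restricting to a general fiber $F$ yields $\tfrac{\epsilon}{m}\,\mathrm{div}_F(s|_F) \geq 0$, forcing $s|_F$ to be constant, a contradiction.

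\medskip
\noindent\textbf{Iitaka fibration conditions and main obstacle.} With both claims established, condition (2) of Definition \ref{def: Iitaka fibration} follows from $\ka(D'; W/U) = \ka(D; X/U) = \dim Z - \dim U$, and condition (3) is precisely the fiber vanishing. For condition (1), the fiber vanishing implies $\phi_{U, |mD|} \circ p : W \dashrightarrow W_m$ contracts general fibers of $q$ to points, so by Lemma \ref{le: contract to point} there is a rational map $g_m : Z \dashrightarrow W_m/U$ with $g_m \circ q = \phi_{U, |mD|} \circ p$; since $\dim Z = \dim W_m$, $g_m$ is birational. The main obstacle is the construction of $B \in |p^*D/U|_{\Rr}$ in the fiber vanishing step: the auxiliary divisor $q^*N$ restores effectivity on $W$, but must be re-absorbed into the ample class $H$ via the small-$\epsilon$ perturbation, which requires the ampleness of $H$ to ensure $H - \tfrac{\epsilon}{m} N$ still has an effective $\Rr$-linear representative over $U$. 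This delicate balance between effectivity and $\Rr$-linear equivalence over $U$ is the essential technical obstruction.
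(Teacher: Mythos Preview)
Your proof is correct and follows the same architecture as the paper: reduce the three conditions of Definition~\ref{def: Iitaka fibration} to the fiber vanishing $\ka(E|_F)=0$, and prove the latter by contradiction by producing an element of $|p^*D/U|_{\Rr}$ that violates the minimality of $E$.

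The construction of the violating divisor is where you differ from the paper. The paper passes to the relative Iitaka fibration $\phi: X \to Z'/Z$ of $E$, writes $E \sim_{Z,\Rr} \phi^*A + E'$ with $0 \le E' < E$ and $A$ ample$/Z$, and then combines this with a large multiple $\ell f^*H$ so that $(\ell+1)D$ decomposes into a semi-ample$/U$ part plus an effective residual strictly smaller than $(\ell+1)E$. Your approach is more elementary: you take a single non-constant section $s$ of $q_*\Oo_W(\lf mE\rf)$, extend it globally by adding $q^*N$, perturb $E$ to $E_\epsilon$, and absorb $\tfrac{\epsilon}{m}N$ back into the ample class $H$. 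Both yield the contradiction, but your route avoids introducing the intermediate variety $Z'$ and the double ampleness juggling; the paper's route is perhaps more geometric in flavor.

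Two small points of precision. First, $E$ is only $\Rr$-Cartier, so ``$mE$ integer'' should be replaced by working with $\lf mE\rf$ throughout; your argument goes through unchanged. Second, condition~(3) of Definition~\ref{def: Iitaka fibration} asks for $\ka(D'|_F)=0$ with $D'=p_*^{-1}D+n\Exc(p)$, not literally $\ka(E|_F)=0$; the bridge is that $(p^*D)|_F \sim_\Rr E|_F$ gives $\ka((p^*D)|_F)=0$ via Proposition~\ref{prop: Choi}, and once condition~(1) is in hand (so $q$ is birational to $\phi_{U,|mD'|}$), Lemma~\ref{le: general fiber has Kod=0} supplies $\ka(D'|_F)=0$. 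The paper is equally informal on this point.
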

\begin{proof}
Replacing $X$ by a resolution of $f$ and $D$ by its pullback, we can assume that $f$ is a morphism. Moreover, $f$ is a contraction by the definition of the ample model. 

First, we claim that for a general fiber $F$ of $f$, $\ka(D|_F) =0$. By the definition of the ample model, $D \sim_{U,\Rr} f^*H+E$ where $H$ is an ample$/U$ divisor on $Z$  and $E \geq 0$ such that for any $B \in |D/U|_\Rr$, $B \geq E$. By $\ka(D;X/U) \geq 0$, we have $\ka(D|_F) \geq 0$. If $\ka(D|_F)>0$, then $\ka(E|_F) = \ka(D|_F)>0$ by Proposition \ref{prop: Choi}. By Lemma \ref{le: fiber kad positive}, there is a sufficiently large and divisible $m$ such that the a rational map $\phi=\phi_{Z, |mE|}: X \dto Z'/Z$ satisfying $\dim Z'>\dim Z$. Possibly taking a resolution, we can assume that $\phi$ is a morphism.  Hence, there is an ample divisor $A$ over $Z$ on $Z'$ such that $E \sim_{Z,\Rr} \phi^*A+E'$ for some $E>E' \geq 0$. In other words, there exists $L$ on $Z$ such that $E \sim_{\Rr} \phi^*A+E'+f^*L$. By the choice of $H$, take $\ell \gg 1$ such that $A+\frac 1 2\ell h^*H$ is ample over $U$ and $\frac 1 2 \ell H+L$ is ample over $U$. Because 
\[
\begin{split}
(\ell +1)D &\sim_{U, \Rr} f^*H+(\ell f^*H+E)+\ell E\\
&\sim_{U, \Rr} f^*H+(\ell f^*H+\phi^*A+E'+f^*L)+\ell E\\
&\sim_{U, \Rr} f^*H+(\frac 1 2 \ell f^*H+\phi^*A)+(\frac 1 2 \ell f^*H+f^*L)+E'+\ell E, 
\end{split}
\] where $f^*H+(\frac 1 2 \ell f^*H+\phi^*A)+(\frac 1 2 \ell f^*H+f^*L)$ is semi-ample over $U$. Hence there exists $0<B' \sim_{U, \Rr} f^*H+(\frac 1 2 \ell f^*H+\phi^*A)+(\frac 1 2 \ell f^*H+f^*L)$ such that $\Supp B'$ does not have any component of $\Supp E$. Because $E'<E$, $\frac{1}{\ell+1}(E'+\ell E)<E$. Hence $B = \frac{1}{\ell+1}B'+\frac{1}{\ell+1}(E'+\ell E) \in |D/U|_{\Rr}$. But $B \not\geq E$, a contradiction. This shows Definition \ref{def: Iitaka fibration} (3)

Next, we show that for any $m$ sufficiently large and divisible such that $\phi_{U, |mD|}: X \dto W_m$, there is a birational map $g_m: Z \dto W_m$ satisfying $g_m \circ f = \phi_{U, |mD|}$. The existence of $g_m$ can be shown by a similar argument as that for Lemma \ref{le: compare images of iitaka fibration}, so we just sketch the argument below. By $\ka(D|_F)=0$ and Lemma \ref{le: contract to point}, there is a map $g_m : Z \dto W_m$. We show that it is a birational map. By the definition of the ample model, there is an ample$/U$ divisor $H$ such that $D \sim_{U, \Rr} f^*H+E$ with $E \geq 0$ and for any $B \in |D/U|_\Rr$, $B \geq E$. Let $G$ be a general fiber of $\pi$, then $D|_G \sim_\Rr (f^*H)|_G+E|_G$. By Proposition \ref{prop: Choi}, $\ka(D|_G)=\ka((f^*H)|_G+E|_G)$. Applying the easy addition (\cite[II Theorem 3.13]{Nak04}) to $f|_G: G \to f(G)$, we have
\[
\ka((f^*H)|_G+E|_G) \leq \ka(E|_F)+\dim f(G)
\] where $F$ is a general fiber of $f|_G$. By the argument in the first part, $\ka(E|_F)=0$, thus $\ka((f^*H)|_G+E|_G) \leq \dim f(G)$. But 
\[
\ka((f^*H)|_G+E|_G) \geq \ka((f^*H)|_G) = \ka(f|_G^*(H|_{f(G)})) = \dim f(G).
\] Thus $\ka((f^*H)|_G+E|_G) = \dim f(G) = \dim Z -\dim U$. By Theorem \ref{thm: compare various definition of relative Iitaka dim}, $\ka(D|_G) = \ka(D; X/U)=\dim W_m-\dim U=\dim Z-\dim U$. Hence $\dim W_m = \dim Z$. Because $f$ and $\phi_{U, |mD|}$ are contractions, $g_m$ is a birational map. This shows Definition \ref{def: Iitaka fibration} (1)

Finally, under the notation of Definition \ref{def: Iitaka fibration} (2), suppose that $h: Y \to X$ is a birational morphism and $D' = h_*^{-1}D+n \Exc(h)$ for $n \gg 1$. Then $\ka(D'; Y/U) = \ka(D; X/U)$ (see the third paragraph of Section \ref{sec: relative Kodaira dim}), and by the above argument, we have $\ka(D; X/U)=\dim Z -\dim U$. This shows Definition \ref{def: Iitaka fibration} (2).
\end{proof}

\begin{remark}
Proposition \ref{prop: ample model bir to Iitaka} is false without assuming $\ka(D; X/U) \geq 0$. Remark \ref{rmk: choi} can be used to construct counterexamples. 
\end{remark}

\section{Finiteness of log canonical models and its corollary}\label{sec: finite ample models}

\begin{proposition}\label{prop: same kodaira dim in the interior of a polytope}
Let $\pi: X\to U$ be a projective morphism between normal varieties. Suppose that $Q \subset \WDiv(X)_\Rr$ is a closed convex set (in Euclidean topology) such that $\ka(D; X/U) \geq 0$ for any $D \in Q^\circ$, where $Q^\circ$ is the relative interior of $Q$. Then $\ka(D; X/U)$ is a constant for any $D \in Q^\circ$.
\end{proposition}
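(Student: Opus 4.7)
The plan is to show that any two points of $Q^\circ$ have the same relative Kodaira dimension. The strategy is to write one point as a positive \emph{rational} convex combination of the other and a carefully chosen auxiliary point in $Q^\circ$, then combine the monotonicity of linear systems under addition of effective divisors with Proposition \ref{prop: Choi}. The openness of $Q^\circ$ in its affine hull is what makes the auxiliary point available, and the rationality of the combination coefficients is what allows me to invoke the elementary scaling invariance $\ka(cD;X/U)=\ka(D;X/U)$ for $c\in\Qq_{>0}$ (immediate from the definition, by running $m$ through multiples of the denominator of $c$).

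First I would fix $D_1,D_2\in Q^\circ$ and, using that $Q^\circ$ is open in its affine hull, pick a rational $\epsilon>0$ small enough that $D_1':=(1+\epsilon)D_1-\epsilon D_2$ still lies in $Q^\circ$. Setting $a=1/(1+\epsilon)$ and $b=\epsilon/(1+\epsilon)$ gives $a,b\in\Qq_{>0}$ with $a+b=1$ and $D_1=aD_1'+bD_2$. Since $\ka(D_1';X/U)\ge 0$ and $\ka(D_2;X/U)\ge 0$, both $\pi_*\Oo_X(\lf m D_1'\rf)$ and $\pi_*\Oo_X(\lf m D_2\rf)$ are nonzero for suitable $m$; combining this with the decomposition $mD=\lf mD\rf+\{mD\}$ and the fact that $\pi^*L$ is $U$-trivial for any $L$ on $U$, one produces effective $\Rr$-divisors $D_1''\sim_{U,\Rr}D_1'$ and $D_2''\sim_{U,\Rr}D_2$. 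Then $E:=aD_1''+bD_2''\ge 0$ and $E\sim_{U,\Rr}D_1$, so by Proposition \ref{prop: Choi}, $\ka(E;X/U)=\ka(D_1;X/U)$.

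The core inequality is $\ka(E;X/U)\ge\ka(D_2;X/U)$. Since $aD_1''\ge 0$, one has $E\ge bD_2''$; by monotonicity of the floor this gives $\lf mE\rf\ge\lf mbD_2''\rf$ for every $m\in\Zz_{>0}$, hence a sheaf inclusion $\Oo_X(\lf mbD_2''\rf)\hookrightarrow\Oo_X(\lf mE\rf)$ and, after applying $\pi_*$, the inequality
\[
\dim\phi_{U,|\lf mE\rf|}(X)\;\ge\;\dim\phi_{U,|\lf mbD_2''\rf|}(X).
\]
Taking $m$ sufficiently large and divisible so that $mb\in\Zz$ (possible since $b\in\Qq$) and passing to the supremum, the right-hand side computes $\ka(bD_2'';X/U)+\dim U=\ka(D_2'';X/U)+\dim U$ by rational scaling invariance, and the latter equals $\ka(D_2;X/U)+\dim U$ by Proposition \ref{prop: Choi} applied to $D_2''\sim_{U,\Rr}D_2$. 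Thus $\ka(D_1;X/U)\ge\ka(D_2;X/U)$, and swapping the roles of $D_1$ and $D_2$ yields the reverse inequality.

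The main technical care is in the construction of the effective $\Rr$-linear representatives $D_i''$ in the relative setting, a standard fact that I would spell out for self-containedness (the only subtlety is that local sections of $\pi_*\Oo_X(\lf mD\rf)$ are promoted to global ones by twisting with a suitable $\pi^*L$, which is $\sim_{U,\Zz}0$). The rationality of $\epsilon$, $a$, $b$ is essential, since real scaling can genuinely alter $\ka$ (cf.\ Remark \ref{rmk: choi}); this is precisely why I rely on $Q^\circ$ being open in its affine hull rather than just path-connected.
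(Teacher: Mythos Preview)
Your proof is correct and uses the same core idea as the paper: express one point of $Q^\circ$ as a convex combination involving another, pass to effective $\Rr$-linearly equivalent representatives, and combine monotonicity of $\ka$ under addition of effective divisors with Proposition~\ref{prop: Choi}. The packaging differs only mildly. The paper first reduces to a general fiber via Proposition~\ref{prop: definition of kod dim coincides} and then uses a ``pick the maximum'' trick (choose $\De\in Q^\circ$ realizing $\max\ka$, write any $D$ as $r\De+(1-r)B$ with $B\in Q^\circ$, and conclude $\ka(D)\geq\ka(\De)$), whereas you work directly in the relative setting and argue symmetrically for an arbitrary pair $D_1,D_2$. Your version is thus slightly more self-contained, since it does not invoke the equivalence $\ka(D;X/U)=\ka(D|_F)$. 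Your insistence on rational convex coefficients is correct but not strictly needed: once one has an \emph{effective} representative $D''\geq 0$, the sandwich $pD''\leq cD''\leq qD''$ for rationals $p<c<q$ gives $\ka(cD'';X/U)=\ka(D'';X/U)$ for any real $c>0$, so the paper's silence on this point is harmless.
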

\begin{proof}
Let $\De \in Q^\circ$ such that $\ka(\De; X/U)$ achieves the maximal value among all $\ka(D; X/U), D \in Q^\circ$. By Proposition \ref{prop: definition of kod dim coincides}, $\ka(D; X/U) = \ka(D|_F)$ where $F$ is a general fiber of $\pi$. Because $Q$ is convex, $Q$ can be approximated by inscribed polytopes. Thus for any $D \in Q^\circ$, there exists $B \in Q^\circ$ and $r\in (0,1)$ such that $D = r\De+(1-r)B$. As $\ka(B|_F)=\ka(B; X/U) \geq 0$,
\[
\ka(D|_F) \geq \max\{\ka(\De|_F), \ka(B|_F)\} \geq \ka(\De|_F)
\] by definition. Thus $\ka(D; X/U)=\ka(D|_F) = \ka(\De|_F)=\ka(\De; X/U)$.
\end{proof}

\begin{proposition}\label{prop: uniform Iitaka fibration}
Let $\pi: X\to U$ be a projective morphism between normal varieties and $Q \subset \WDiv(X)_{\Rr}$ be a closed convex set. Suppose that $\ka(D/U) \geq 0$ for any $D \in Q^\circ$.  Then there is a rational map $f: Y \dto Z/U$ which is an Iitaka fibration associated with any $D \in Q^\circ$.
\end{proposition}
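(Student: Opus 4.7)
The plan is to fix a single interior point $\De \in Q^\circ$, take its Iitaka fibration, and show that the \emph{same} map continues to serve as an Iitaka fibration for every other $D \in Q^\circ$.

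By Proposition \ref{prop: same kodaira dim in the interior of a polytope}, the relative Kodaira dimension $\ka(D;X/U)$ takes a common value $k$ for all $D \in Q^\circ$. I would fix $\De \in Q^\circ$ and let $f: Y \to Z/U$, together with a birational morphism $h: Y \to X$, be an Iitaka fibration associated with $\De$; so $\dim Z = k + \dim U$ and $\ka(\De'|_{F_f}) = 0$ for a general fiber $F_f$ of $f$, where $\De' = h_*^{-1}\De + n_\De \Exc(h)$ with $n_\De \gg 1$. The claim is that this same $f$ is an Iitaka fibration associated with every $D \in Q^\circ$. The core of the proof is to verify condition (3) of Definition \ref{def: Iitaka fibration}, namely $\ka(D'|_{F_f}) = 0$ for $D' = h_*^{-1}D + n_D \Exc(h)$ with $n_D \gg 1$; conditions (1) and (2) will then follow.

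For the lower bound $\ka(D'|_{F_f}) \geq 0$ I would apply the easy addition (\cite[II Theorem 3.13]{Nak04}) to $f|_{F_{\pi \circ h}}: F_{\pi \circ h} \to Z_u$, where $u$ is the image in $U$: by Theorem \ref{thm: compare various definition of relative Iitaka dim}, $\ka(D'|_{F_{\pi \circ h}}) = k$, and $\dim Z_u = k$, giving $k \leq \ka(D'|_{F_f}) + k$. For the upper bound I would use convexity: since $\De \in Q^\circ$, there exist $B \in Q^\circ$ and $r \in \Qq \cap (0,1)$ with $\De = rD + (1-r)B$. Choosing $n_\De = rn_D + (1-r)n_B$, the $\Rr$-linearity of strict transforms yields $\De' = rD' + (1-r)B'$ and hence $\De'|_{F_f} = rD'|_{F_f} + (1-r)B'|_{F_f}$. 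By the lower bound applied to both $D$ and $B$, the numbers $\ka(D'|_{F_f})$ and $\ka(B'|_{F_f})$ are each $\geq 0$, so the same convex-combination comparison used in the proof of Proposition \ref{prop: same kodaira dim in the interior of a polytope} gives $0 = \ka(\De'|_{F_f}) \geq \max\{\ka(D'|_{F_f}), \ka(B'|_{F_f})\}$, forcing $\ka(D'|_{F_f}) = 0$.

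Condition (2) then follows from $\ka(D';Y/U) = k = \dim Z - \dim U$. For condition (1), with $\ka(D'|_{F_f}) = 0$ in hand and $m$ sufficiently large and divisible, Lemma \ref{le: contract to point} produces a rational map $g_m: Z \dasharrow W_m^D/U$ with $g_m \circ f = \phi_{U, |mD|}$; both target varieties have relative dimension $k$ over $U$, so $g_m$ is dominant and generically finite, and the connectedness of general fibers of $f$ and of $\phi_{U, |mD|}$ (for $m$ large) forces $g_m$ to have connected general fibers and therefore to be birational. The main obstacle is the upper bound step in the verification of (3); it uses the convexity of $Q$ in an essential way, and it is precisely where the uniformity of $f$ across the interior of $Q$ is produced.
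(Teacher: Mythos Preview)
Your argument is correct and takes a genuinely different route from the paper's. The paper first isolates the special case of two \emph{comparable} divisors: if $D \leq B$ with $\ka(D;X/U)=\ka(B;X/U)$, then an Iitaka fibration $f$ for $D$ has $\ka(B|_F)=0$ on general fibers (argued by contradiction via the mechanism of Lemma~\ref{le: general fiber has Kod=0}), and hence $f$ also serves for $B$. It then connects an arbitrary pair $\De, D \in Q^\circ$ by a monotone chain $\De=\De_0,\De_1,\ldots,\De_k=D$ inside an inscribed polytope and applies the comparable case step by step.

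Your approach bypasses both the comparable-pair lemma and the chaining. You obtain the lower bound $\ka(D'|_{F_f})\geq 0$ from easy addition applied to $f|_{F_{\pi\circ h}}$, and the upper bound by writing the \emph{fixed} interior point $\De$ as a convex combination $rD+(1-r)B$ with $B\in Q^\circ$, then invoking the same inequality $\ka(rA+(1-r)C)\geq\max\{\ka(A),\ka(C)\}$ (for $\ka(A),\ka(C)\geq 0$) that underlies Proposition~\ref{prop: same kodaira dim in the interior of a polytope}. This is more direct: it handles every $D\in Q^\circ$ in one shot, without any reduction to comparable pairs and without needing to manufacture a monotone chain inside a polytope. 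What the paper's route buys is an independently useful intermediate statement (comparable divisors of equal relative Kodaira dimension share an Iitaka fibration); what your route buys is economy and a cleaner dependence on convexity alone. One small point of care: when you write ``choosing $n_\De = rn_D+(1-r)n_B$'' you are implicitly using that condition~(3) for $\De$ holds for \emph{every} sufficiently large $n_\De$, so you are free to enlarge it to match your choice of $n_D,n_B$; this is true (the pushforward identity $h_*\Oo_Y(\lf m\De'\rf)=\Oo_X(\lf m\De\rf)$ persists for all larger $n_\De$, so Lemma~\ref{le: general fiber has Kod=0} still gives $\ka(\De'|_{F_f})=0$), but it is worth making explicit.
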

\begin{proof}
First, we show that when $\ka(D; X/U) = \ka(B; X/U) \geq 0$ for $D \leq B$, then $D$ and $B$ share a same Iitaka fibration (up to birational equivalence). Let $f: Y \to Z/U$ be an Iitaka fibration associated with $D$. By taking a resolution, we can assume that $X=Y$. For a general fiber $F$ of $f$, we have $\ka(B|_F)=0$. In fact, if $\ka(B|_F)>\ka(D|_F)=0$, then just as the proof in Lemma \ref{le: general fiber has Kod=0}, we have $\ka(B; X/U) > \ka(D; X/U)$, a contradiction. For a sufficiently large and divisible $m$, let $\phi_D = \phi_{U, |mD|}: X \dto W_m/U$, and $\phi_B = \phi_{U, |mB|}: X \dto V_m/U$ be the corresponding rational maps. We claim that there exists a birational map $\theta: V_m \dto W_m/U$ such that $\theta \circ \phi_B = \phi_D$. Without loss of generality, we can assume that $ \phi_D, \phi_B$ are morphisms. Let $G$ be general fibers of $\phi_B$. Then by $\ka(B|_{G})=0$ and $D \leq B$, we see that $G$ is contracted to a point by $\phi_D$. Thus by Lemma \ref{le: contract to point}, there exists $\theta: V_m \dto W_m$ such that $\theta \circ \phi_B = \phi_D$. Because $\dim V_m = \dim W_m$ and $\theta$ has connected fibers over general points (as $\phi_D, \phi_B$ have generic connected fibers), $\theta$ is a birational map. By Definition \ref{def: Iitaka fibration}, $f: Y \to Z/U$ is also an Iitaka fibration associated with $B$ over $U$.

By Proposition \ref{prop: same kodaira dim in the interior of a polytope}, $\ka(D; X/U)$ is a constant for any $D \in Q^\circ$. Fix a divisor $\De \in Q^\circ$ and let $D \in Q^\circ$ be an arbitrary divisor. As a convex set can be approximated by inscribed polytopes, we can assume that $\De, D \subset P \subset Q^\circ$, where $P$ is a polytope. Then there is a sequence of divisors in $P$, $\De=\De_0, \De_1, \ldots, \De_k=D$ such that either $\De_i \geq \De_{i+1}$ or $\De_i \leq \De_{i+1}$. The above shows that if $f: Y \to Z/U$ is an Iitaka fibration associated with $\De$, then it is also an Iitaka fibration associated with $\De_i$ for each $i$. In particular, $f: Y \to Z/U$ is an Iitaka fibration associated with $D$. The claim follows as $D$ is chosen arbitrarily. 
\end{proof}

Under the notation and assumptions of Theorem \ref{thm: main}, suppose that $g: Y \to Z/U$ is an Iitaka fibration associated with $K_X+\Delta$ for each $\Delta \in P$ (it exists by Proposition \ref{prop: uniform Iitaka fibration}). There is $\ep>0$ such that $(X, \De)$ is $\ep$-lc for any $\De \in P$. After taking a higher model, we can assume that $h: Y \to X$ is a log resolution of $(X, \Supp \De)$ for any $\De \in P$. Define 
\begin{equation}\label{eq: resolution}
K_Y+\Delta_Y = h^{*}(K_X+\De)+(1-r)\Supp (\Exc(h)),
\end{equation} where $0<r<\ep$ is a rational number. Then $(Y, \De_Y)$ is still klt, and 
\begin{equation}\label{eq: P_Y}
P_Y \coloneqq \{\De_Y \mid \De \in P\}
\end{equation} is a rational polytope.

Suppose that $\De_Y$ is a $\Qq$-divisor. Applying the canonical bundle formula \eqref{eq: canonical bundle relation} to $(Y, \Delta_Y) \to Z$, we have 
\begin{equation}\label{eq: canonical bundle}
K_Y+\De_Y \sim_\Rr g^*(K_Z+D^{\De_Y}_{Z}+M^{\De_Y}_Z)+B^{\De_Y},
\end{equation} where $D^{\De_Y}_Z$ is the divisorial part (see \eqref{eq: divisorial part}) and $M^{\De_Y}_Z$ is the moduli part (see \eqref{eq: moduli part}).  

Suppose that $\{\De_i \mid 1 \leq i \leq k\}$ is the set of vertices of $P$. In particular, each $\De_i$ is a $\Qq$-divisor. Let
\begin{equation}\label{eq: vertex}
b_i(K_Y+\De_i) \sim f^*(b_iL_i) + B^{\De_i}
\end{equation} be as in \eqref{eq: canonical bundle relation} with $b_iL_i$ a Cartier divisor.

Let $r_i \in \Qq_{\geq 0}$ such that $\sum_i r_i =1$. Then for the divisor $\De_Y = \sum r_i \De_i$ and a sufficiently divisible $m \in \Zz_{\geq 0}$, we have
\begin{equation}\label{eq: canonical bundle for De 1}
m(K_Y+\De_Y) \sim f^*(m \sum r_i L_i) + m\sum r_i B^{\De_i}.
\end{equation} 

\begin{proposition}\label{prop: compare canonical bundle formula for Q divs}
Under the above notation and assumptions, if 
\begin{equation}\label{eq: canonical bundle for De 2}
b (K_Y +\De_Y )\sim f^*(b L)+bB^{\De_Y}
\end{equation} as in \eqref{eq: canonical bundle relation}, then 
\[
\sum r_i L_i \sim_\Qq L \text{~and~} \sum r_i B^{\De_i}=B^{\De_Y}.
\] In particular, $f_*\Oo_Y(k (\Supp \sum_i B_+^{\De_i})) = \Oo_Z$ for any $k \in \Zz_{\geq 0}$.
\end{proposition}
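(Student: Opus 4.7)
The plan is to deduce both equalities by applying Lemma~\ref{le: redundant part equal}. First I clear denominators: choose $N \in \Zz_{>0}$ sufficiently divisible so that $Nr_i \in \Zz$ for every $i$ and $N/b,\, N/b_i \in \Zz$. Scaling \eqref{eq: canonical bundle for De 2} by $N/b$ and taking $m = N$ in \eqref{eq: canonical bundle for De 1} produces two expressions for $N(K_Y + \De_Y)$, whose subtraction is the $\sim$-equivalence
\[
f^*(NL) + NB^{\De_Y} \;\sim\; f^*\!\Bigl(N\sum_i r_i L_i\Bigr) + N\sum_i r_i B^{\De_i}.
\]
I will apply Lemma~\ref{le: redundant part equal} with $B := NB^{\De_Y}$ and $C := N\sum_i r_i B^{\De_i}$; its conclusion $B=C$ and $NL \sim N\sum_i r_i L_i$ then translate to $B^{\De_Y} = \sum_i r_i B^{\De_i}$ and $L \sim_{\Qq} \sum_i r_i L_i$, as required.

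The hypotheses on $B$ are immediate from the canonical bundle formula \eqref{eq: canonical bundle for De 2}. For $C$, the codimension claim $\codim f(C_-)\geq 2$ follows because $\Supp C_- \subseteq \bigcup_i \Supp B^{\De_i}_-$, and each $f(\Supp B^{\De_i}_-)$ has codimension $\geq 2$. The main technical step, which I expect to be the primary obstacle, is showing $f_*\Oo_Y(\lfloor k C_+ \rfloor) = \Oo_Z$ for every $k \in \Zz_{\geq 0}$.

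My approach is to argue fiberwise. On a very general fiber $F$ of $f$, each $B^{\De_i}_-|_F$ vanishes (because $f(\Supp B^{\De_i}_-)$ has codimension $\geq 2$), so $B^{\De_i}|_F = B^{\De_i}_+|_F$. Restricting \eqref{eq: vertex} to $F$ (noting $f^*L_i|_F \sim 0$) and using adjunction gives $B^{\De_i}_+|_F = K_F + \De_i|_F$, and analogously $B^{\De_Y}_+|_F = K_F + \De_Y|_F$. Forming the linear combination with weights $r_i$ and using $\sum_i r_i = 1$ yields the key identity
\[
\sum_i r_i B^{\De_i}_+\bigr|_F \;=\; B^{\De_Y}_+\bigr|_F
\]
as effective $\Qq$-divisors on $F$. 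Consequently $C_+|_F = N B^{\De_Y}_+|_F$, and the canonical bundle formula property $f_*\Oo_Y(\lfloor m B^{\De_Y}_+\rfloor) = \Oo_Z$ gives $h^0(F, \Oo_F(\lfloor m C_+|_F \rfloor)) = 1$ for every $m \geq 0$. Combined with the inclusion $\Oo_Z \hookrightarrow f_*\Oo_Y(\lfloor m C_+\rfloor)$ (from $C_+ \geq 0$) and torsion-freeness, this forces generic rank one; to upgrade to the sheaf equality $f_*\Oo_Y(\lfloor m C_+\rfloor) = \Oo_Z$ I must verify that for every prime $P \subset Z$, some prime $Q \subset Y$ over $P$ satisfies $\mult_Q C_+ = 0$, which I expect to extract by comparing $\mult_Q C_+ \leq N \sum_i r_i \mult_Q B^{\De_i}_+$ with the individual pushforward conditions for each $B^{\De_i}_+$ via the fiberwise identity above.

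With all hypotheses of Lemma~\ref{le: redundant part equal} in place, the main equalities $B^{\De_Y} = \sum_i r_i B^{\De_i}$ and $L \sim_{\Qq} \sum_i r_i L_i$ follow. For the ``In particular'' assertion, the reduced divisor $\Supp \sum_i B^{\De_i}_+$ has restriction to a general fiber $F$ equal to $\bigcup_i \Supp B^{\De_i}_+|_F$, which by the fiberwise identity (taking $\De_Y$ in the interior with all $r_i > 0$) coincides with $\Supp B^{\De_Y}_+|_F$; the pushforward condition for $B^{\De_Y}_+$ then transfers to $\Supp \sum_i B^{\De_i}_+$, giving $f_*\Oo_Y(k\cdot \Supp \sum_i B^{\De_i}_+) = \Oo_Z$ for every $k \in \Zz_{\geq 0}$.
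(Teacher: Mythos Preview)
Your overall plan coincides with the paper's: verify the two hypotheses on $C = N\sum_i r_i B^{\De_i}$ and then invoke Lemma~\ref{le: redundant part equal}. The gap is in your fiberwise step. Restricting \eqref{eq: vertex} to a very general fiber $F$ yields only $(K_Y+\De_i)|_F \sim_\Qq B^{\De_i}_+|_F$, not an equality of divisors: the relation in \eqref{eq: vertex} is a linear equivalence, so after restriction the two sides differ by $\di(s_i|_F)$ for some rational function $s_i \in K(Y)$, and there is no reason this should vanish. Consequently your ``key identity'' $\sum_i r_i B^{\De_i}_+|_F = B^{\De_Y}_+|_F$ holds only as a $\Qq$-linear equivalence, and you cannot directly read off $h^0(F,\Oo_F(\lfloor m\, C_+|_F\rfloor))=1$ from the corresponding fact for $B^{\De_Y}_+$.

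The paper closes this by a different route. It uses that $f$ is, by the setup preceding the proposition, an Iitaka fibration for $K_Y+\De_Y$ (Proposition~\ref{prop: uniform Iitaka fibration} together with Lemma~\ref{le: general fiber has Kod=0}), so $\kappa((K_Y+\De_Y)|_F)=0$. Since $\sum_i r_i B^{\De_i}_+|_F$ is effective and $\sim_\Qq (K_Y+\De_Y)|_F$, one gets $\kappa(\sum_i r_i B^{\De_i}_+|_F)=0$, hence $h^0=1$ for all multiples, without ever needing an equality of divisors on $F$. For the passage from the fiberwise statement to the sheaf equality---which you leave as ``I expect to extract''---the paper argues that no component $P$ of $\Supp\sum_i B^{\De_i}_+$ can have $\codim f(P)=1$ (any such $P$ lies in some $\Supp B^{\De_j}_+$ and would violate $f_*\Oo_Y(\lfloor \ell B^{\De_j}_+\rfloor)=\Oo_Z$ for $\ell\gg 0$); combined with $\kappa|_F=0$ this forces the pushforward to equal $\Oo_Z$. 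Your proposed route via a single component $Q$ over each prime of $Z$ with $\mult_Q C_+=0$ is not obviously available, since the individual pushforward conditions on the $B^{\De_i}_+$ only furnish possibly \emph{different} such components for each $i$.
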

\begin{proof}
First, we show $f_*\Oo_{Y}(\lf k  \sum r_i B_+^{\De_i} \rf)=\Oo_Z$ for any $k \in\Zz_{\geq 0}$. By $B_+^{\De_i} \geq 0$, it is enough to show the claim for sufficiently large $k$. For a very general fiber $F$ of $f$, by Proposition \ref{prop: uniform Iitaka fibration} and Lemma \ref{le: general fiber has Kod=0}, we have  $\ka((K_Y+\De_Y)|_F)=0$. Thus 
\[
\ka(\sum r_i B_+^{\De_i}|_F)=\ka((K_Y+\De_Y)|_F) = 0.
\] If $f_*\Oo_{Y}(\lf k  \sum r_i B_+^{\De_i} \rf) \supsetneq\Oo_Z$, then let $\phi=\phi_{Z, |k (\sum r_i B_+^{\De_i})|}: Y \dto W_k/Z$ for a sufficiently large and divisible $k$. Then by the argument for Lemma \ref{le: general fiber has Kod=0}, $W_k \to Z$ is birational. Hence, there exists an open subset $U \subset Z$ such that $H^0(U, f_*\Oo_{Y}(\lf k  \sum r_i B_+^{\De_i} \rf)) = H^0(U, \Oo_Z)$. Therefore, for any open subset $V\subset Z$, $H^0(f^{-1}(V), \Oo_{Y}(\lf k  \sum r_i B_+^{\De_i} \rf))$ can be identified with elements in $K(Z)$. When $f_*\Oo_{Y}(\lf k  \sum r_i B_+^{\De_i} \rf) \supsetneq\Oo_Z$, there is an open subset $V \subset Z$ and a non-regular section (on $V$) $s\in K(Z)$ such that
\[
s\circ f \in H^0(f^{-1}(V), \Oo_{Y}(\lf k  \sum r_i B_+^{\De_i} \rf)).
\] In particular, for $\di(s\circ f)=\di(s\circ f)_{0}-\di(s\circ f)_{\infty}$,  we have $\di(s\circ f)_{\infty}|_{f^{-1}(V)} \neq 0$. Let $\Theta = k  \sum r_i B_+^{\De_i}$. There is no component $P$ of $\Supp \Theta$ such that $\codim f(P)=1$. Otherwise, suppose that $P$ is a component of $\Supp B_+^{\De_j}$, then $f_*\Oo_{Y}(\lf l B_+^{\De_j} \rf) \supseteq \Oo_Z(P) \supsetneq \Oo_Z$ for $l \gg 1$, a contradiction. Hence $\codim f(\Theta^v) \geq 2$ and ${\rm div}(s\circ f)_{\infty}$ is a horizontal divisor. Thus $(s\circ f)|_F$ is not a constant either. But this contradicts to  $\ka(\Theta|_F)=0$. Hence $f_*\Oo_{Y}(\lf k  \sum r_i B_+^{\De_i} \rf)=\Oo_Z$ for any $k \in \Zz_{\geq 0}$. 

Then, pushing forward \eqref{eq: canonical bundle for De 1} and \eqref{eq: canonical bundle for De 2} over $Z\backslash f(\Supp\sum B^{\De_i}_-)$, and combining with $\codim f(\sum r_i B^{\De_i}_-) \geq 2$, we have $\sum r_i L_i \sim_\Qq L$. By Lemma  \ref{le: redundant part equal}, $\sum r_i B^{\De_i}=B^{\De_Y}$.

Finally, if we take $r_i>0$ for each $i$, then $\Supp \sum_i r_i B_+^{\De_i} = \Supp \sum_i  B_+^{\De_i}$. Hence $f_*\Oo_Y(k(\Supp \sum_i  B_+^{\De_i})) = \Oo_Z$ for $k \in \Zz_{\geq 0}$ follows from $$f_*\Oo_Y(\lf k( \sum_i  r_i B_+^{\De_i}) \rf) = \Oo_Z$$ for $k \in \Zz_{\geq 0}$.
\end{proof}

Now, we are ready to prove Theorem \ref{thm: main} and Corollary \ref{cor: canonical model for R-div}.

\begin{proof}[Proof of Theorem \ref{thm: main}]
By Proposition \ref{prop: uniform Iitaka fibration}, there is a projective morphism $f: Y \to Z/U$ between smooth varieties which is the Iitaka fibration associated with $K_X+\De$ for any $\De \in P$. Possibly after taking a higher model of $Y$, we define $\De_Y$ as in \eqref{eq: resolution}. By Lemma \ref{le: compare ample models} (3), the log canonical model for $(Y, \De_Y)$ is also the log canonical model for $(X, \De)$. Replacing $(Y, \De_Y)$ by $(X, \De)$, we can assume that $f: X \to Z$ is a morphism. We use the construction in \cite[\S 4.4]{FM00}:
\begin{enumerate}
\item Let $\Sigma \subset Z$ be an effective divisor such that $f$ is smooth and $\Supp \De^h$ is relatively normal crossing over $Z \backslash \Sigma$,
\item $f(\Supp \De^v) \subset \Sigma$, and
\item  $f$ is flat over $Z \backslash \Sigma$.
\end{enumerate}

Let $\nu: Z' \to Z$ be a birational morphism from a nonsingular variety such that 

\begin{enumerate}[(i)]
\item $\Sigma' = \nu^{-1}(\Sigma)$ is a simple normal crossing divisor,
\item $\nu$ induces an isomorphism $Z' \backslash \Sigma' \simeq Z\backslash \Sigma$, and
\item The irreducible component $X_1$ of $X \times_Z Z'$ dominating $Z'$ is flat over $Z'$.
\end{enumerate}

Let $X'$ be the normalization of $X_1$, and $\tau: X' \to X$, $f': X' \to Z'$ the induced morphisms. Let $g': Y' \to X'$ be a log resolution of $(X, \Supp \De)$ for each $\De \in P$, and set 
\begin{equation}\label{eq: on Y'}
K_{Y'}+\Gamma' = (\tau \circ g')^*(K_X+\De).
\end{equation} By $(X, \De)$ klt for any $\De\in P$, there exists $\ep\in\Qq_{>0}$, such that for $\Upxi \coloneqq f^{-1}_* \De + (1-\ep) \Exc(\tau \circ g')$, $(Y, \Upxi)$ is klt with $\Upxi - \Gamma \geq 0$ exceptional over $X$. Notice that $\Upxi$ is chosen from a rational polytope $Q$. Replacing $Z'$ by $Z$, we can assume that $Z$ is smooth and $X_1 \to Z$ is flat.

Suppose that $\{\Upxi_i \mid 1 \leq i \leq k\}$ is the set of vertices of $Q$. Let $\mu: Y' \to Z$, then by the canonical bundle formula (see \eqref{eq: canonical bundle}),
\begin{equation}\label{eq: for vertices}
K_{Y'}+\Upxi_i \sim_\Rr \mu^*(K_{Z} + D^{\Upxi_i}_{Z}+M^{\Upxi_i}_{Z})+B^{\Upxi_i}. 
\end{equation} Moreover, $M^{\Upxi_i}_{Z}$ is nef$/U$ (see  \cite[Theorem 4.5 (iv)]{FM00} and the discussion in the third paragraph of Subsection \ref{subsec: canonical bundle formula} ) and $(Z, D^{\Upxi_i}_{Z})$ is still klt (\cite[Theorem 3.1]{Amb04}). By $\codim \mu(B_-^{\Upxi_i}) \geq 2$ and (iii) above, $B_-^{\Upxi_i}$ is exceptional over $X'$. Because  $K_{Z} + D^{\Upxi_i}_{Z}+M^{\Upxi_i}_{Z}$ is big$/U$, there is a klt pair $(Z, \Theta_i)$ such that 
\begin{equation}\label{eq: K_Z+Theta = L}
K_{Z}+\Theta_i \sim_{U, \Qq} K_{Z} + D^{\Upxi_i}_{Z}+M^{\Upxi_i}_{Z}.
\end{equation} This $(Z, \Theta_i)$ can be also obtained by $M^{\Upxi_i}_{Z}$ abundant as in \cite[Theorem 0.2]{Amb05}.

Because $K_Z+\Theta_i$ is big$/U$ and klt, there exists a sufficiently general ample$/U$ $\Qq$-Cartier divisor $A>0$ such that there exists $E_i \geq 0$ satisfying $A+E_i \in |K_{Z}+\Theta_i/U|_\Qq$. Take $\delta \in \Qq_{>0}$ such that $(Z, \Theta_i+\delta (A+E_i))$ is still klt, then we have
\[
(1+\delta)(K_{Z}+\Theta_i) \sim_{U, \Qq } K_{Z}+\Theta_i+\delta (A+E_i).
\] For any $\Upxi = \sum r_i \Upxi_i$ with $\sum r_i  =1, r_i \in \Rr_{\geq 0}$. Set $\Theta \coloneqq \sum r_i \Theta_i$, then
\[
K_{Z}+\Theta = \sum_i r_i (K_{Z} + \Theta_i),
\] is klt. By \eqref{eq: for vertices} and \eqref{eq: K_Z+Theta = L},
\[
\begin{split}
K_{Y'} + \Upxi = \sum_i r_i(K_{Y'}+\Upxi_i) &\sim_{U, \Rr} \sum_i r_i  \mu^*(K_Z+\Theta_i)+\sum_i r_i B^{\Upxi_i}\\
&=\mu^*(K_Z+\Theta) + \sum_i r_i B^{\Upxi_i}.
\end{split}
\]

By Proposition \ref{prop: compare canonical bundle formula for Q divs}, $\mu_*\Oo_{Y'}(\lf k \sum_i r_i B_+^{\Upxi_i} \rf) = \Oo_Z$ for any $k \in \Zz_{\geq 0}$. Thus, the log canonical model for $(Z, \Theta)$ is the ample model for $K_{Y'} + \Upxi+\sum_i r_i B_-^{\Upxi_i}$ by Lemma \ref{le: compare ample models} (1). By \eqref{eq: on Y'}, $\Upxi - \Gamma \geq 0$ and $\sum_i r_i B_-^{\Upxi_i}$ is exceptional over $X$, using Lemma \ref{le: compare ample models} (3), we see that the ample model for $K_{Y'} + \Upxi+\sum_i r_i B_-^{\Upxi_i}$ is the log canonical model for $(X, \De)$. By
\[
(1+\delta )(K_{Z}+\Theta) \sim_{U, \Rr} K_{Z} + (\Theta+\delta \sum r_i E_i) + \delta A,
\] the log canonical model for $(Z, (\Theta+\delta \sum r_i E_i) + \delta A)$ is the log canonical model for $(Z, \Theta)$, and hence the log canonical model for $(X, \De)$ by the above discussion.

Let $V_A =\{\sum r_i \Theta_i+\delta \sum r_i E_i + \delta A \mid r_i \geq 0, \sum_{i=1}^k r_i =1\}$. By \cite[Corollary 1.1.5]{BCHM10}, there are finitely many rational maps $\psi_j: Z \dto Z_j/U, 1 \leq j \leq p$, such that 
\[
V_A= \cup_{j=1}^p \Aa_{\psi_j, \pi}(V_A).
\] Moreover, each $\bar\Aa_{\psi_j, \pi}(V_A)$ is a finite union of rational polytopes. Notice that $\De= \sum r_i \De_i \in P$ corresponds to $\vartheta (\De) \coloneqq \sum r_i \Theta_i+\delta \sum r_i E_i + \delta A \in V_A$. Thus $\vartheta$ is a linear map between divisors. Set $\phi_j = \psi_j \circ f$, then 
\[
\Aa_{\phi_j, \pi}(P) = \{\De \in P \mid \vartheta(\De) \in  \Aa_{\psi_j, \pi}(V_A)\}.
\] Therefore, $P = \cup_{j=1}^p \Aa_{\phi_j, \pi}(P)$ and $\bar\Aa_{\phi_j, \pi}(P)$ is a finite union of rational polytopes.
\end{proof}

\begin{remark}\label{rmk: relatively compact}
We have to work with the relatively compact subset $P$ instead of $Q$ because for $\De$ on the boundary of $Q$, $\kappa(K_X+\De; X/U)$ may become smaller. Therefore, for the fixed Iitaka fibration $X \to Z$, $K_Z+D^\De_Z+M_Z^\De$ is no longer big$/U$, and thus the argument breaks. 
\end{remark}

\begin{remark}\label{rmk: constant Iitaka is enough}
Theorem \ref{thm: main} still holds true if we assume that $Q$ is a rational polytope such that $\ka(K_X+\De; X/U) \geq 0$ are constant for all $\De \in Q$. Under this assumption, there is no need to work with the relatively compact subset $P$.
\end{remark}

\begin{proof}[Proof of Corollary \ref{cor: canonical model for R-div}]
Taking a resolution of $X$, we may assume that $X$ is smooth by Lemma \ref{le: compare ample models} (3). It is well-known that
\[
R \coloneqq \{D \geq 0 \mid (X, D) \text{~is lc, and ~} D \subset \Supp \Delta\}
\] is a rational polytope. By $\ka(K_X+\De; X/U) \geq 0$, there exists $m\in \Zz_{\geq 0}$ such that $\pi_*\Oo_{X}(\lf m(K_X+\De) \rf) \neq 0$, where $\pi: X \to U$. Because ${\lf m\De \rf}/{m}$ is a $\Qq$-divisor, there is a rational polytope $Q \subset R$ such that $\De \in Q$ and for any $D \in Q$, we have $\ka(K_X+D; X/U) \geq 0$. If $\De$ is a $\Qq$-divisor, then by the same argument for Theorem \ref{thm: main}, we reduce to the case where $K_Z+\Theta$ is $\Qq$-Cartier with klt singularities and big over $U$. Then the log canonical model exists by \cite[Corollary 1.1.5 (2)]{BCHM10}. If $\De$ is not a $\Qq$-divisor, then $\De$ lies in the relative interior of a face of $Q$. Replacing $Q$ by this face, we can assume that there is a rational polytope $P$ such that $P \subset Q^\circ$. By Theorem \ref{thm: main}, $(X, \De)$ has a log canonical model.
\end{proof}

\bibliographystyle{alpha}

\bibliography{bibfile}

\begin{thebibliography}{BCHM10}

\bibitem[Amb04]{Amb04}
Florin Ambro.
\newblock Shokurov's boundary property.
\newblock {\em J. Differential Geom.}, 67(2):229--255, 06 2004.

\bibitem[Amb05]{Amb05}
Florin Ambro.
\newblock The moduli b-divisor of an lc-trivial fibration.
\newblock {\em Compos. Math.}, 141(2):385--403, 2005.

\bibitem[BCHM10]{BCHM10}
Caucher Birkar, Paolo Cascini, Christopher~D. Hacon, and James McKernan.
\newblock Existence of minimal models for varieties of log general type.
\newblock {\em J. Amer. Math. Soc.}, 23(2):405--468, 2010.

\bibitem[Cho08]{Cho08}
Sung~Rak Choi.
\newblock The geography of log models and its applications.
\newblock {\em Ph.D. Thesis, The Johns Hopkins University, Baltimore}, 2008.

\bibitem[Cor07]{Cor07}
Alessio Corti.
\newblock 3-fold flips after {S}hokurov.
\newblock In Alessio Corti, editor, {\em Flips for 3-folds and 4-folds},
  chapter~2, pages 18--48. Oxford University Press, Oxford, 2007.

\bibitem[FM00]{FM00}
Osamu Fujino and Shigefumi Mori.
\newblock A canonical bundle formula.
\newblock {\em J. Differential Geom.}, 56(1):167--188, 09 2000.

\bibitem[Har77]{Har77}
Robin Hartshorne.
\newblock {\em Algebraic geometry}, volume~52.
\newblock Springer-Verlag, 1977.

\bibitem[Jia20]{Jia20}
Junpeng Jiao.
\newblock On the finiteness of ample models.
\newblock {\em arXiv: 2005.02613}, 2020.

\bibitem[Laz04]{Laz04I}
Robert Lazarsfeld.
\newblock {\em Positivity in algebraic geometry. {I}}, volume~48 of {\em
  Results in Mathematics and Related Areas. 3rd Series.}
\newblock Springer-Verlag, Berlin, 2004.

\bibitem[Li20]{Li20}
Zhan Li.
\newblock Boundedness of the base varieties of certain fibrations.
\newblock {\em arXiv: 2002.06565}, 2020.

\bibitem[Liu02]{Liu02}
Qing Liu.
\newblock {\em Algebraic geometry and arithmetic curves}, volume~6.
\newblock Oxford University Press, 2002.

\bibitem[Nak04]{Nak04}
Noboru Nakayama.
\newblock {\em Zariski-decomposition and abundance}, volume~14 of {\em MSJ
  Memoirs}.
\newblock Mathematical Society of Japan, Tokyo, 2004.

\bibitem[SC11]{SC11}
Vyacheslav~V. Shokurov and Sung~Rak Choi.
\newblock Geography of log models: theory and applications.
\newblock {\em Centr. Eur. J. Math.}, 9(3):489--534, 2011.

\bibitem[Sho96]{Sho96}
Vyacheslav~V. Shokurov.
\newblock {$3$}-fold log models.
\newblock {\em J. Math. Sci.}, 81(3):2667--2699, 1996.

\bibitem[Siu02]{Siu02}
Yum-Tong Siu.
\newblock Extension of twisted pluricanonical sections with plurisubharmonic
  weight and invariance of semipositively twisted plurigenera for manifolds not
  necessarily of general type.
\newblock In {\em Complex geometry}, pages 223--277. Springer, 2002.

\bibitem[Siu06]{Siu06}
Yum-Tong Siu.
\newblock A general non-vanishing theorem and an analytic proof of the finite
  generation of the canonical ring.
\newblock {\em arXiv:math/0610740}, 2006.

\end{thebibliography}

\end{document}